\documentclass[10pt,a4paper]{amsart}
\usepackage[T1]{fontenc}
\usepackage[utf8]{inputenc}
\usepackage{amsmath,amssymb,mathtools,mathrsfs}
\usepackage[textwidth=16cm,textheight=22cm,centering]{geometry}
\usepackage[shortlabels]{enumitem}
\usepackage{aliascnt,etoolbox}
\usepackage{needspace}
\makeatletter
\patchcmd{\@setaddresses}{\/:\space}{\/\space}{}{}
\patchcmd{\@setaddresses}{\/:\space}{\/\space}{}{}
\patchcmd{\@setaddresses}{\/:\space}{\/\space}{}{}
\makeatother
\usepackage{xcolor}
\usepackage[nocompress]{cite}
\usepackage[colorlinks=true,citecolor=red,linkcolor=blue,urlcolor=blue]{hyperref}
\theoremstyle{plain}
\newtheorem{thm}{Theorem}[section]
\newaliascnt{prop}{thm}
\newtheorem{prop}[prop]{Proposition}
\aliascntresetthe{prop}
\newaliascnt{lem}{thm}
\newtheorem{lem}[lem]{Lemma}
\aliascntresetthe{lem}
\theoremstyle{definition}
\newaliascnt{defn}{thm}

\aliascntresetthe{defn}
\newaliascnt{asu}{thm}
\newtheorem{asu}[asu]{Assumption}
\aliascntresetthe{asu}

\newaliascnt{cor}{thm}
\newtheorem{cor}[cor]{Corollary}
\aliascntresetthe{cor}
\newaliascnt{rem}{thm}
\newtheorem{rem}[rem]{Remark}
\aliascntresetthe{rem}

\newcounter{proofnumber}
\newcounter{stp}
\theoremstyle{definition}

\AtBeginEnvironment{proof}{\stepcounter{proofnumber}\setcounter{stp}{0}}
\numberwithin{equation}{section}
\setlist[enumerate]{font=\normalfont,label=(\roman*),leftmargin=*}
\allowdisplaybreaks[2]
\newcommand{\R}{\mathbb R}
\newcommand{\T}{\mathbb T}
\newcommand{\N}{\mathbb N}
\newcommand{\rL}{\mathrm L}
\newcommand{\rH}{\mathrm H}
\newcommand{\rC}{\mathrm C}
\newcommand{\rW}{\mathrm W}
\newcommand{\rX}{\mathrm X}

\newcommand{\rV}{\mathrm V}
\newcommand{\rZ}{\mathrm Z}
\newcommand{\dt}{\partial_t}
\newcommand{\dz}{\partial_z}
\newcommand{\dd}{\,\mathrm d}
\newcommand{\divH}{\operatorname{div}_{\rH}}
\newcommand{\nablaH}{\nabla_{\rH}}
\newcommand{\DeltaH}{\Delta_{\rH}}

\title[Compressible primitive equations with physical vacuum]{Local smooth solutions of the free-boundary compressible primitive equations with physical vacuum}
\author{Tarek Z\"ochling}
\address{Technische Universit\"at Darmstadt, Schlo\ss{}gartenstra\ss{}e 7, 64289 Darmstadt, Germany}
\email{zoechling@mathematik.tu-darmstadt.de}
\date{}
\subjclass[2020]{Primary 35Q86, 35R35.
Secondary 76N10, 35K65, 47J07}

\keywords{Compressible primitive equations, free boundary,
physical vacuum, local existence, weighted energy estimates,
Nash--Moser iteration \\
Tarek Z\"ochling gratefully acknowledge the support by the Deutsche Forschungsgemeinschaft (DFG) through the Research Unit FOR~5528}

\hypersetup{pdftitle={Local smooth solutions of the free-boundary
compressible primitive equations with physical vacuum}}
\begin{document}
\begin{abstract}
Local existence and uniqueness of smooth solutions are established
for the three-dimensional compressible primitive equations with
constant viscosity and a physical vacuum free boundary.
We employ a Nash-Moser approach to address two distinct derivative
losses, one arising from the free boundary geometry in the
transformed viscosity and the other from the diagnostic
reconstruction of the vertical velocity. We handle the first
through Alinhac's good velocity unknown and the second through
a modified height. Combining these two corrections with weighted
energy estimates adapted to the vacuum degeneracy yields the
tame estimates required for the iteration.
\end{abstract}
\maketitle

\section{Introduction}

Large-scale atmospheric flows are commonly described using the
hydrostatic approximation, reflecting the separation between their
horizontal and vertical length scales. In this approximation, the
vertical pressure gradient balances gravity, so that
\begin{equation*}
 \partial_z p=-g\varrho,
\end{equation*}
where $\varrho$ denotes the density.
This balance is a fundamental ingredient of atmospheric and oceanic
models, see, for example, \cite{LTW-atmosphere-92,LTW-ocean-92}.
In particular, the pressure decreases with height wherever the density
is positive.

When the polytropic pressure law $p=\varrho^\gamma$ with
$\gamma>1$ is imposed, hydrostatic balance implies that
$\varrho^{\gamma-1}$ decreases linearly with height and,
starting from a finite positive density, reaches zero at a finite
height. Both density and pressure vanish there, and extending
them by zero above this height connects the atmosphere to vacuum.
In this model, the resulting surface represents an idealized
boundary between the atmosphere and the surrounding space,
as in \cite{LTX}. Its location evolves with the flow, leading
to a free boundary value problem for the compressible primitive
equations in which the upper boundary is unknown and both
pressure and density vanish there. This problem is formulated
in \autoref{sec:formulation}. 

Our main result, \autoref{thm:vacuum-existence}, establishes local
existence and uniqueness of smooth solutions for this model.
We construct the solution by a Nash-Moser scheme applied to the full
transformed equations. This scheme addresses an actual derivative loss
in the direct Sobolev estimates. The transformed viscosity contains
second derivatives of the height, although its continuity equation
provides no parabolic smoothing. At each linearized correction step,
we use the good-unknown mechanism introduced by Alinhac \cite{Alinhac}
to cancel the highest derivatives of the height correction. This
geometric cancellation was also employed by Masmoudi and Rousset
\cite{MR} in their analysis of the viscous free-surface Navier-Stokes
equations. However, in the hydrostatic problem, reconstructing the vertical
velocity from the continuity equation costs one horizontal
derivative. We address this additional derivative loss by
introducing a second good unknown, defined by modifying the
height through the elliptic inverse.
The modified height removes the leading velocity coupling
from the height equation. The toy model in
\autoref{sec:toy-model} explains the two corrections and the basic
energy estimate.

The retention of the complete viscosity distinguishes this problem from
the model of Liu, Titi, and Xin \cite{LTX}. There, the transformed
viscosity and boundary conditions are linearized around equilibrium,
geometric terms that are at least quadratic in the perturbation are
discarded, and a correction is added to restore momentum conservation,
as explained in \cite[Remarks~1-2]{LTX}. The global small-perturbation
theorem concerns this modified model. Here all contributions of the
original physical viscosity and traction condition are retained, and
their derivative loss is handled in the linearized estimates and the
Nash-Moser iteration.

Local existence of smooth solutions has a long history in free-boundary
fluid mechanics. In particular, Lindblad's work on the incompressible
Euler equations \cite{LindbladIncompressible}, constructs local smooth solutions under
the Taylor sign condition using tame estimates and Nash-Moser
iteration. The corresponding treatment of compressible liquids
\cite{Lindblad} uses the same type of iteration, with strictly positive
density at the free surface. For irrotational water waves, tame estimates
for the linearized equations and Nash-Moser iteration also yield local
well-posedness in \cite{Lannes}. For compressible gases with a physical
vacuum, local well-posedness was established using degenerate parabolic
regularization in \cite{CoutandShkoller} and weighted energy estimates
with a linear approximation scheme in \cite{JangMasmoudi}.
More recently, an Eulerian theory in lower-regularity Sobolev spaces,
including continuous dependence on the initial data, was developed in
\cite{IfrimTataru}. 

Within the hydrostatic approximation, local well-posedness for the
inviscid incompressible primitive equations of an ocean with a moving upper surface
and analytic initial data was proved in \cite{IKZ}. 
For viscous free-surface flows without the hydrostatic approximation,
a classical local theory for the incompressible Navier-Stokes equations
was established in \cite{Beale}. Constant viscosity in the presence of
a physical vacuum was treated for the compressible Navier-Stokes-Poisson
system in spherical symmetry in \cite{Jang}. More directly related to
the vacuum degeneracy here, Gui, Wang, and Wang \cite{GWW} established
local well-posedness for the full three-dimensional compressible
Navier-Stokes equations with constant viscosity and a vacuum free
surface. 

For the incompressible primitive equations on fixed domains, the
existence theory is substantially more developed. Foundational results
on global weak solutions were obtained in
\cite{LTW-atmosphere-92,LTW-ocean-92}. Cao and Titi \cite{CaoTiti}
subsequently proved global existence and uniqueness of strong solutions
in three dimensions for arbitrarily large $\rH^1$ initial data. The $\rL^p$ approach in \cite{HK}
extended global strong well-posedness to initial data with lower
differentiability by means of the hydrostatic Stokes operator.

The compressible primitive equations remain less understood.
Local strong well-posedness with gravity and positive density, and also
with vacuum when gravity is absent, was established in \cite{LT-21}.
The zero Mach number limit for well-prepared initial data was justified
in \cite{LT-Mach}, together with global strong solutions close to
incompressible flows under the assumptions of that theorem.
For density-dependent viscosities that vanish at vacuum, global weak
solutions without gravity were constructed in \cite{LT-weak}.
These results concern distinct density and viscosity regimes and do
not provide a theory for the full moving physical-vacuum problem with
constant viscosity studied here.

In \cite{HIRZ}, we developed a hydrostatic Lagrangian approach with
maximal regularity, obtaining local strong solutions for positive-density
data and global solutions for small perturbations of isothermal
equilibria, including gravity. For the two-dimensional isothermal
problem without gravity, we established global strong well-posedness
for arbitrarily large data in \cite{HJMZ}. Beyond barotropic pressure
laws, we treated the heat-conducting compressible primitive equations
with gravity and proved global strong well-posedness near equilibrium
in \cite{ZoechlingHeat}. The free boundary and the vanishing density
in the present problem require the weighted estimates and tame
iteration developed below.

The article is organized as follows. In \autoref{sec:formulation}, we
state the physical problem and reduce it using hydrostatic balance.
The compatibility conditions and main theorem are given in
\autoref{sec:main-result}. In \autoref{sec:transformation}, we transform
the full system to the fixed domain and identify the height derivatives
in the viscosity. \autoref{sec:nash-moser} begins with the toy model,
then derives the complete linearized equations and establishes the
basic and higher-order estimates, recovery of normal derivatives and
construction of the smooth tame linear inverse. Finally,
\autoref{sec:nonlinear-existence} proves nonlinear existence and
uniqueness by the Nash-Moser inverse theorem.

\section{Setting of the problem}
\label{sec:formulation}

We consider the problem of a hydrostatic compressible atmosphere
connected to vacuum by a free boundary. This is described by the
compressible primitive equations in a moving domain. We write
$x_\rH=(x_1,x_2)\in\T^2$ for the horizontal coordinates and $z$ for
the vertical coordinate. For the unknown height $H=H(x_\rH,t)>0$,
we define the atmospheric domain by
\begin{equation*}
 \Omega_H(t)=\{(x_\rH,z)\mid x_\rH\in\T^2,\ 0<z<H(x_\rH,t)\}.
\end{equation*}
The unknowns are the density $\varrho>0$, the velocity
$u=(v,w)$ with
$v=(v_1,v_2)$, and the pressure $p$. We assume the polytropic law
$p=\varrho^\gamma$, where $\gamma>1$, and constant coefficients
$g>0$ and $\mu,\mu'>0$. To specify the equations, we define
\begin{equation*}
 \nablaH=(\partial_{x_1},\partial_{x_2})^\top,\quad
 \divH v=\partial_{x_1}v_1+\partial_{x_2}v_2,\quad
 \DeltaH=\partial_{x_1}^2+\partial_{x_2}^2
 \ \text{ and }\ \Delta=\DeltaH+\partial_z^2.
\end{equation*}
In the systems below, the notation $(0,T)\times\Omega_H(t)$
means the moving space-time region with $0<t<T$ and
$(x_\rH,z)\in\Omega_H(t)$. With the convention
$(\nablaH v)_{ij}=\partial_{x_j}v_i$, the full problem reads
\begin{equation}\label{eq:physical-system}
 \left\{\begin{aligned}
 \dt\varrho+\divH(\varrho v)+\dz(\varrho w)&=0
       &&\text{ in }(0,T)\times\Omega_H(t),\\
 \varrho(\dt v+u\cdot\nabla v)-\mu\Delta v-\mu'\nablaH\divH v+\nablaH p&=0
       &&\text{ in }(0,T)\times\Omega_H(t),\\
 \dz p&=-g\varrho
       &&\text{ in }(0,T)\times\Omega_H(t),\\
 p&=\varrho^\gamma
       &&\text{ in }(0,T)\times\Omega_H(t),\\
 H(0)=H_0,\quad\varrho(0)=\varrho_0
       \ \text{ and }\ v(0)&=v_0.
 \end{aligned}\right.
\end{equation}
At the free surface and at the fixed bottom, respectively, the
boundary conditions for $t\in(0,T)$ and $x_\rH\in\T^2$ read
\begin{equation}\label{eq:physical-boundary}
 \left\{
 \begin{aligned}
 p|_{z=H}=0\ \text{ and }\ w|_{z=H}&=\dt H+v|_{z=H}\cdot\nablaH H,\\
 \left.\big(\mu\dz v-
 \bigl[\mu(\nablaH v+(\nablaH v)^\top)
           +(\mu'-\mu)(\divH v)I_2\bigr]\nablaH H\big)\right|_{z=H}&=0,\\
 w|_{z=0}=0\ \text{ and }\ \dz v|_{z=0}&=0.
 \end{aligned}\right.
\end{equation}
These prescribe zero exterior pressure, a material free surface,
zero viscous traction, and an impermeable slip bottom.
All fields are horizontally periodic. 

We now reformulate the above problem using the explicit vertical dependence from the hydrostatic balance as well as the polytropic pressure law. From now on, we restrict to $\gamma=2$ and set $g=1$ for notational convenience.
Combining the hydrostatic balance with the polytropic pressure law
in \eqref{eq:physical-system} gives
\begin{equation*}
 \partial_z(2\varrho+z)=0.
\end{equation*}
Integrating in the vertical direction and using the vacuum condition
$\varrho|_{z=H}=0$ yields $2\varrho=H-z$.
Consequently, the density, pressure and horizontal pressure gradient
are given by
\begin{equation}\label{eq:hydrostatic-density}
 \varrho=\frac{H-z}{2},\quad p=\frac{(H-z)^2}{4}
 \ \text{ and }\ \nablaH p=\varrho\nablaH H.
\end{equation}

To obtain an evolution equation for $H$, we integrate continuity
in the vertical direction. Using the above relations and
the boundary conditions for $w$, we obtain
\begin{equation}\label{eq:physical-height}
 \frac H2\dt H+\divH\int_0^H\varrho v\,\dd z=0.
\end{equation}
Once the density is represented by \eqref{eq:hydrostatic-density},
the unaveraged continuity equation determines the vertical transport
diagnostically. More precisely, integrating from the bottom and
substituting \eqref{eq:physical-height} gives
\begin{equation}\label{eq:physical-diagnostic}
 (H-z)w=\frac zH\divH\int_0^H(H-s)v(x_\rH,s,t)\,\dd s
 -\int_0^z\divH\bigl((H-s)v(x_\rH,s,t)\bigr)\,\dd s.
\end{equation}
Thus, the right-hand side depends only on $(H,v)$ and contains one
horizontal derivative of $v$. 
The boundary conditions for $w$ are automatically satisfied by this representation. Indeed, at the bottom, \eqref{eq:physical-diagnostic} gives
$H w|_{z=0}=0$, so $w|_{z=0}=0$ because $H>0$.
At the free surface, it holds that
\begin{equation*}
 \begin{aligned}
 w|_{z=H}
 &=\lim_{z\uparrow H}
 \frac{\displaystyle
       \frac zH\divH\int_0^H(H-s)v(x_\rH,s,t)\,\dd s
       -\int_0^z\divH\bigl((H-s)v(x_\rH,s,t)\bigr)\,\dd s}
      {H-z}\\
 &=-\frac1H\divH\int_0^H(H-s)v(x_\rH,s,t)\,\dd s
       +v|_{z=H}\cdot\nablaH H\\
 &=\dt H+v|_{z=H}\cdot\nablaH H.
 \end{aligned}
\end{equation*}
Consequently, bottom impermeability and the material free-surface
condition are already encoded in the weighted diagnostic and the
height equation.
With these preparations, the system \eqref{eq:physical-system}
can be recast as
\begin{equation}\label{eq:physical-reduced}
 \left\{\begin{aligned}
 \dt H+\frac1H\divH\int_0^H(H-z)v\,\dd z&=0
       &&\text{ in }(0,T)\times\T^2,\\
 \frac{H-z}{2}
       (\dt v+v\cdot\nablaH v+w\dz v+\nablaH H)
       -\mu\Delta v-\mu'\nablaH\divH v&=0
       &&\text{ in }(0,T)\times\Omega_H(t),\\
 H(0)=H_0\ \text{ and }\ v(0)&=v_0.
 \end{aligned}\right.
\end{equation}
Its boundary conditions for $t\in(0,T)$ and $x_\rH\in\T^2$ are
\begin{equation}\label{eq:physical-reduced-boundary}
 \left\{\begin{aligned}
 \left.\big(\mu\dz v-
 \bigl[\mu(\nablaH v+(\nablaH v)^\top)
      +(\mu'-\mu)(\divH v)I_2\bigr]\nablaH H\big)\right|_{z=H}&=0,\\
 \dz v|_{z=0}&=0.
 \end{aligned}\right.
\end{equation}
In this formulation, the prognostic variables are the height $H$
and the horizontal velocity $v$. The remaining quantities are
determined diagnostically through
$\varrho=(H-z)/2$, $p=(H-z)^2/4$ and the weighted representation
\eqref{eq:physical-diagnostic}. In particular,
$\varrho_0=(H_0-z)/2$. Since the vertical transport in
\eqref{eq:physical-reduced} involves only the product $(H-z)w$,
substituting \eqref{eq:physical-diagnostic} gives a closed system
for $(H,v)$.
\section{Main result}\label{sec:main-result}
Since $H-z$ vanishes at the upper boundary, the
initial data must satisfy compatibility conditions at every order.
These compatibility conditions are formulated in the following.
\begin{asu}[Smooth compatibility]\label{def:vacuum-compatibility}
Let $H_0\in\rC^\infty(\T^2)$ be strictly positive and let
$v_0\in\rC^\infty(\overline{\Omega_{H_0}})^2$. We define the formal initial time derivatives recursively, starting
from $H|_{t=0}=H_0$ and $v|_{t=0}=v_0$. 
The first derivatives
are obtained by evaluating the right-hand sides of
\eqref{eq:physical-reduced} at $t=0$, and higher derivatives $\dt^{j+1} H |_{t=0}$ and $\dt^{j+1} v |_{t=0}$ are
obtained by differentiating these expressions $j$-times in time and evaluating at $t=0$. Setting
\begin{equation*}
     H_j=\partial_t^jH|_{t=0}
 \ \text{ and }\
 v_j=\partial_t^jv|_{t=0},
\end{equation*}
we assume that the initial data satisfy
\begin{equation*}
 H_j\in \rC^\infty(\T^2) , \quad \inf H_0 >0
 \ \text{ and }\
 v_j\in \rC^\infty(\overline{\Omega_{H_0}})^2 \ \text{ for } j \in \N_0.
\end{equation*}
Moreover assume that 
the boundary conditions hold at every time derivative order,
which means that
\begin{equation*}
 \begin{aligned}
 \partial_t^j(\partial_zv|_{z=0})
 |_{t=0}=0 \ \text{ and } \ 
 \partial_t^j\big(
 \left[\mu\partial_zv
 -\bigl(\mu(\nablaH v+(\nablaH v)^\top)
       +(\mu'-\mu)(\divH v)I_2\bigr)\nablaH H
 \right]_{z=H}
 \big)|_{t=0}=0.
 \end{aligned}
\end{equation*}
\end{asu}

The smoothness of the initial acceleration required in
\autoref{def:vacuum-compatibility} implies that
\begin{equation*}
 \left.(\mu\Delta v_0+\mu'\nablaH\divH v_0)\right|_{z=H_0}=0.
\end{equation*}
Indeed, this follows by evaluating the momentum equation at
$t=0$ and $z=H_0$, since both $H_0-z$ and the weighted vertical transport vanish
at the free surface. We are now in a position to state the main theorem of this article.

\begin{thm}\label{thm:vacuum-existence}
Suppose that $H_0$ and $v_0$ satisfy
\autoref{def:vacuum-compatibility}. Then there exist $T>0$
and a unique pair $(H,v)$ solving \eqref{eq:physical-reduced}
subject to \eqref{eq:physical-reduced-boundary}, with the product
$(H-z)w$ given by \eqref{eq:physical-diagnostic}, such that
\begin{equation*}
 H\in\rC^\infty([0,T]\times\T^2)
 \quad\text{with } H(x_\rH,t)\geq\tfrac12\min_{\T^2}H_0
 \ \text{ and }\
 v\in\rC^\infty([0,T]\times\overline{\Omega_H(t)})^2.
\end{equation*}
\end{thm}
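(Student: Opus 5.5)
The plan follows the route announced in the introduction. First I flatten the moving domain with the rescaling $z=H(x_\rH,t)\,y$, $y\in(0,1)$. The weight then becomes $H(1-y)$, which vanishes linearly at $y=1$; this is the physical-vacuum distance function. The system becomes a coupled problem for $(H,V)$ with $V(x_\rH,y,t)=v(x_\rH,Hy,t)$ on the fixed slab $\T^2\times(0,1)$. In the transformed viscosity, second derivatives of $H$ appear multiplied by $\dz V$. The diagnostic weighted vertical transport $(H-z)w$ becomes an explicit vertical integral of $\divH$ of $(1-y)H^2V$. I write the whole problem as $\Phi(H,V)=F$, where $F$ encodes the initial data. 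To make the problem homogeneous at $t=0$, I subtract a smooth approximate solution built from the Taylor polynomial $\sum_j t^j(H_j,v_j)/j!$, using a Borel-type extension. The compatibility conditions of \autoref{def:vacuum-compatibility} ensure this extension satisfies the equations and the boundary conditions to infinite order at $t=0$, so the remainder $f=\Phi(\text{approx})$ vanishes to infinite order at $t=0$. Working on $(-\infty,T]$ with data supported in $t>0$, the smallness of $f$ on short time intervals then follows.

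The core of the proof is the linearized problem $\Phi'(H,V)(\delta H,\delta V)=f$ around a smooth state near the approximate solution. I introduce two good unknowns. The first is Alinhac's unknown $\dot V=\delta V-\delta H\,\partial_y V/(\cdot)$, whose precise normalisation is fixed by the rescaling $z=Hy$. It absorbs the $\nabla_\rH^2\delta H$ terms in the viscosity and in the traction boundary condition, leaving only zero-order multiples of $\delta H$. The second is a modified height $\widetilde{\delta H}=\delta H+\mathcal L^{-1}(\ldots\delta V)$, built from an elliptic inverse on $\T^2$. It removes the leading term $\divH\int (1-y)H\,\delta V$ from the height equation, which otherwise costs one horizontal derivative relative to $\delta H$.

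For this reformulated system I prove a basic weighted energy estimate. I test the momentum equation with $\dot V$ in the weighted space $L^2((1-y)\,\dd y)$ and use the viscous dissipation $\mu\|\nabla \dot V\|^2$ together with the traction boundary condition. The transport part of the height equation is handled by an $L^2$ estimate. Next come higher tangential estimates, obtained by applying $\dt^k$ and $\nabla_\rH^\alpha$, which commute with the boundary. Normal derivatives are then recovered from the equation itself. Since the momentum equation is parabolic with a degenerate time coefficient, I solve for $\partial_y^2V$ and use weighted Hardy inequalities near $y=1$, in the style of Jang–Masmoudi, to trade powers of $(1-y)$ for derivatives. All estimates are kept tame: the highest norm of the coefficients appears only linearly, multiplied by low norms of $f$. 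Existence for the linearized problem follows by Galerkin or by duality, and smoothness follows from the a priori bounds.

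Finally, I apply a Nash–Moser inverse function theorem, for instance Hamilton's version in the tame Fréchet category, or a Hörmander-type scheme with smoothing operators adapted to $t$ and to the weighted spaces. I verify that $\Phi$ is smooth tame, that $\Phi'$ has a smooth tame family of inverses near the approximate solution, and that $f$ is small in low norms for small $T$. These give a solution on $[0,T]$. Choosing $T$ small keeps $H\ge \tfrac12\min H_0$, and the smoothness of $v$ on $\overline{\Omega_H(t)}$ follows by undoing the rescaling. Uniqueness follows from the basic weighted $L^2$ estimate applied to the difference of two smooth solutions, rewritten in good unknowns, together with Gronwall's inequality; no loss arises because both solutions are smooth. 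The main obstacle I expect is the linear tame estimate, specifically controlling the commutators produced by the two good unknowns at top order. These commutators must be closed against the degenerate weight at $y=1$ without losing any derivative of the coefficient $(H,V)$ beyond tame form. I would first try to handle them by weighted Hardy inequalities combined with interpolation.
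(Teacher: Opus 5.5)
\paragraph{The gap: the traction term and the height norm.} The step that fails is your claim that Alinhac's good unknown leaves only zero-order multiples of $\delta H$ in the traction condition. It removes the height Hessian from the interior viscosity. However, the variation of the surface normal produces a tangential-gradient term at the top that no velocity unknown can absorb. In the paper's toy model, $\widetilde V_z|_{z=1}=(\nablaH V)|_{z=1}\nablaH h-\beta_z|_{z=1}h+g_3$. In the full problem, the boundary value $\mathcal T_H\widetilde V|_{z=1}$ contains $[(\mathcal S_HV)\nablaH h]_{z=1}$.

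Testing momentum with the good unknown therefore produces $\int_{\T^2}(\mathcal S_HV)\nablaH h\cdot\widetilde V|_{z=1}\,\dd x_\rH$. The trace of $\widetilde V\in\rH^1(\Omega)$ lies only in $\rH^{1/2}(\T^2)$, so this term is controlled by the dissipation only if $h$ is propagated in $\rH^{1/2}(\T^2)$. Your basic estimate, with the height handled by an $L^2$ transport estimate, does not close. Your uniqueness argument rests on the same estimate and inherits the problem; the paper instead obtains uniqueness from causality of the tame inverse and the local inverse theorem.

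\paragraph{Consequence for the modified height.} For the same reason, your motivation for the modified height is misplaced. In an $L^2$ estimate the column term $\divH\int(1-y)H\,\delta V$ costs nothing. Testing the height equation with a suitable multiple of $h$ cancels it against the pressure term in momentum, and it is bounded by $\|\delta V\|_{\rH^1}$ anyway. It becomes a loss only in the $\rH^{1/2}$ estimate, where it lies in $\rL^2(\T^2)$ but not in $\rH^{1/2}(\T^2)$.

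An elliptic inverse applied to $\delta V$ does not remove this term by itself; the mechanism is to use the momentum equation. In the paper, $\widetilde V=\mathcal L_H^{-1}(\mathcal G-Q\nablaH h-Q\widetilde V_t)+\mathcal N_H(G_1,G_0)$ with the shifted viscous operator $\mathcal L_H$. Hence the column term equals $-B_H\widetilde V_t$ plus terms bounded in $\rH^{1/2}$. That bound uses $\mathcal L_H^{-1}\colon\rH^{-1/2}\to\rH^{3/2}$ and the gain $\mathcal Q(Q\mathcal N_H(G_1,G_0))\in\rH^{3/2}(\T^2)$ for boundary data in $\rH^{-1/2}$. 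Then $\widetilde h=h-B_H\widetilde V$ obeys a transport equation with $\rH^{1/2}$ forcing. That estimate is closed through the commutator $[(1-\DeltaH)^{1/4},b_1\cdot\nablaH]$, and one recovers $\|h\|_{\rH^{1/2}}\leq C(\|\widetilde h\|_{\rH^{1/2}}+\|\sqrt Q\,\widetilde V\|_{\rL^2})$.

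A genuinely two-dimensional version of your idea can work. Testing momentum with $(1-y)\varphi(x_\rH)$ gives a Lam\'e equation on $\T^2$ for the weighted column average, and the top traction term drops out because the weight vanishes there. But it must be built on this substitution of $\widetilde V_t$ from momentum and aimed at $\rH^{1/2}$ control of the height.

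\paragraph{Normal derivatives.} The weighted Hardy inequalities you plan are unnecessary. The viscosity is uniformly elliptic, and $\partial_z^2\widetilde V$ is recovered by inverting $A^{33}$, using $\|Q\partial_t\widetilde V\|_{\rL^2}\leq C\|\sqrt Q\,\partial_t\widetilde V\|_{\rL^2}$.
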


\begin{rem}
The compatibility class contains nonzero data. For a constant height
$H_0=\overline H>0$, take $v_0=(a(x_2)b(z),0)$, where
$a\in\rC^\infty(\T)$ and $b\in\rC_c^\infty(0,\overline H)$.
The recursion then gives
\begin{equation*}
 H_{j+1}=0\ \text{ and }\ 
 v_{j+1}=\frac{2\mu}{\overline H-z}
       (\partial_{x_2}^2+\partial_z^2)v_j\quad(j\geq0).
\end{equation*}
Every finite iterate is smooth and supported away from both boundaries,
so all compatibility conditions hold.
\end{rem}

\section{Transformation to the fixed domain}
\label{sec:transformation}
We write $\rL^p$, $\rH^s$, $\rW^{s,p}$ and $\rC^k$ for the usual
Lebesgue, Sobolev and classical differentiability spaces on the
indicated domain. 
We define the reference
domain $\Omega$ by $\Omega=\T^2\times(0,1)$. For a smooth function $f$ on $\T^2$, we define
its harmonic extension $Ef$ by
\begin{equation*}
 (\DeltaH+\partial_z^2)Ef=0\quad\text{in }\Omega,\quad
 Ef|_{z=0}=0\ \text{ and }\ Ef|_{z=1}=f.
\end{equation*}
Equivalently, with the convention $\T=\R/\mathbb Z$, we obtain
\begin{equation*}
 Ef(x_\rH,z)=z\widehat f(0)
 +\sum_{k\in\mathbb Z^2\setminus\{0\}}
       \frac{\sinh(2\pi|k|z)}{\sinh(2\pi|k|)}
          \widehat f(k)e^{2\pi i k\cdot x_\rH},
\end{equation*}
so that 
\begin{equation*}
 \|Ef\|_{\rH^1(\Omega)}
 \leq C\|f\|_{\rH^{1/2}(\T^2)},
\end{equation*}
Here $C$ depends only on $\Omega$.
For a time-dependent function this problem is solved at each fixed
time. We then define the vertical coordinate $\zeta$ and the map
$\phi$ by
\begin{equation*}
 \zeta(x,t)=zH_0(x_\rH)+E(H-H_0)(x,t)
 \ \text{ and }\ \phi(x,t)=(x_\rH,\zeta(x,t)).
\end{equation*}
Thus $\zeta|_{z=0}=0$, $\zeta|_{z=1}=H$, and
$\zeta(x,0)=zH_0(x_\rH)$. To check that this is a valid change of
coordinates, we define $J=\det D_x\phi$ and calculate
\begin{equation*}
 D_x\phi=
 \begin{pmatrix}
  1&0&0\\0&1&0\\
  \partial_{x_1}\zeta&\partial_{x_2}\zeta&H_0+\partial_zE(H-H_0)
 \end{pmatrix}
 \ \text{ and }\ J=H_0+\partial_zE(H-H_0)=\partial_z\zeta.
\end{equation*}
Writing $H_\ast=\min_{\T^2}H_0>0$, we choose $T>0$ sufficiently
small so that
\begin{equation*}
 \sup_{0\leq t\leq T}
 \|\partial_zE(H(t)-H_0)\|_{\rL^\infty(\Omega)}
 <\frac{H_\ast}{2}.
\end{equation*}
It then follows that
\begin{equation*}
 J=H_0+\partial_zE(H-H_0)\geq\frac{H_\ast}{2}>0
 \ \text{ and }\
 H=\int_0^1J\,\dd z\geq\frac{H_\ast}{2}
 \quad\text{for }t\in[0,T].
\end{equation*}
For each $x_\rH$, the function $z\mapsto\zeta(x_\rH,z,t)$ is
strictly increasing from $0$ to $H(x_\rH,t)$. Consequently,
$\phi(\cdot,t)$ is a smooth diffeomorphism from $\overline\Omega$
to $\overline{\Omega_H(t)}$. 
To express the reduced problem in these coordinates, we define the
transformed horizontal velocity $V$, pressure $P$ and density $R$ by
\begin{equation*}
 V(x,t)=v(\phi(x,t),t),\quad P(x,t)=p(\phi(x,t),t)
 \ \text{ and }\ R(x,t)=\varrho(\phi(x,t),t).
\end{equation*}
In particular, \eqref{eq:hydrostatic-density} gives
\begin{equation*}
 R=\frac{H-\zeta}{2},\quad P=\frac{(H-\zeta)^2}{4},\quad
 Q=JR=\frac{J(H-\zeta)}2
 \ \text{ and }\ Q_0=Q|_{t=0}=\frac{H_0^2}{2}(1-z).
\end{equation*}
The Jacobian bound also gives the smooth positive quotient
\begin{equation*}
 \frac Q{1-z}=\frac J2\int_0^1
 J(x_\rH,1-\theta(1-z),t)\,\dd\theta
 \geq\frac{H_\ast^2}{8}.
\end{equation*}
Setting
\begin{equation*}
 D_i^\phi=\partial_{x_i}-\frac{\partial_{x_i}\zeta}{J}\partial_z
 \quad(i=1,2),\quad D_3^\phi=J^{-1}\partial_z,\quad
 \nablaH^\phi=(D_1^\phi,D_2^\phi)^\top
 \ \text{ and }\ \divH^\phi V=D_1^\phi V_1+D_2^\phi V_2
\end{equation*}
one calculates
$(\partial_{x_i}v)\circ\phi=D_i^\phi V$,
$(\partial_zv)\circ\phi=D_3^\phi V$, and
$(\partial_tv)\circ\phi=\partial_tV-(E\partial_tH)D_3^\phi V$.
With $\Delta=\DeltaH+\partial_z^2$ and
$V_0(x_\rH,z)=v_0(x_\rH,zH_0(x_\rH))$, the transformed system reads as
\begin{equation}\label{eq:reduced-problem}
 \left\{\begin{aligned}
 \dt H+\frac2{H_0}\divH\int_0^1Q_0V\,\dd z&=f_1(H,V)
      &&\text{ in }(0,T)\times\T^2,\\
 Q_0(\dt V+\nablaH H)-\mu\Delta V-\mu'\nablaH\divH V&=f_2(H,V)
      &&\text{ in }(0,T)\times\Omega,\\
 H(0)=H_0\ \text{ and }\ V(0)&=V_0,
 \end{aligned}\right.
\end{equation}
where the nonlinear terms $f_1(H,V)$ and $f_2(H,V)$ are given by 
\begin{equation}\label{eq:transformed-f1}
 f_1=\frac{2(H-H_0)}{HH_0}\divH\int_0^1Q_0V\,\dd z
       -\frac2H\divH\int_0^1(Q-Q_0)V\,\dd z.
\end{equation}
as well as 
\begin{equation}\label{eq:transformed-f2}
 \begin{aligned}
 f_2={}&(Q_0-Q)(V_t+\nablaH H)
       -Q V\cdot\nablaH V-\mathcal B V_z\\
 &+\mu\divH\bigl((J-1)\nablaH V-V_z\otimes\nablaH\zeta\bigr)
  +\mu\dz\left[\left(\frac{1+|\nablaH\zeta|^2}{J}-1\right)V_z
                         -(\nablaH V)\nablaH\zeta\right]\\
 &+\mu'\nablaH\bigl((J-1)\divH V-\nablaH\zeta\cdot V_z\bigr)
  -\mu'\dz\left[\nablaH\zeta\left(\divH V
                         -\frac{\nablaH\zeta\cdot V_z}{J}\right)\right],
 \end{aligned}
\end{equation}
where 
$\mathcal B$ is given by
\begin{equation}\label{eq:transformed-diagnostic}
 \mathcal B(x_\rH,z,t)=-\int_0^z
   \left[\frac{H-\zeta}{2}\partial_zEL
       +\frac J2(L-EL)+\divH(QV)\right](x_\rH,s,t)\,\dd s.
\end{equation}
Here, the transformed vertical velocity $W$ is also defined diagnostically from \eqref{eq:physical-diagnostic}, that is,
\begin{equation*}
 \frac12\bigl([(H-z)w]\circ\phi\bigr)
                  =\mathcal B+R(EL+V\cdot\nablaH\zeta)
\end{equation*}
where $L$ is given by 
\begin{equation*}
 L(H,V)=-\frac2H\divH\int_0^1QV\,\dd z.
\end{equation*}
The transformed boundary conditions can likewise be written as
\begin{equation}\label{eq:transformed-boundary}
\begin{aligned}
 \mu\dz V|_{z=1}=f_3(H,V) \ \text{ and } \ 
 \dz V|_{z=0}=0
 \end{aligned}
\end{equation}
with
\begin{equation*}
 \begin{aligned}
f_3=\Bigl[&
 \mu\left(1-\frac{1+|\nablaH H|^2}{J}\right)V_z
 +\mu\bigl(\nablaH V+(\nablaH V)^\top\bigr)\nablaH H\\
 &+(\mu'-\mu)(\divH V)\nablaH H
 -\frac{\mu'}{J}(\nablaH H\cdot V_z)\nablaH H
 \Bigr]|_{z=1}.
 \end{aligned}
\end{equation*}
Expanding the horizontal viscous term in \eqref{eq:transformed-f2}
gives
\begin{equation*}
 \mu\divH\bigl((J-1)\nablaH V-V_z\otimes\nablaH\zeta\bigr)
 =\mu(J-1)\DeltaH V+\mu(\nablaH V)\nablaH J
 -\mu(\nablaH V_z)\nablaH\zeta-\mu V_z\DeltaH\zeta.
\end{equation*}
The last term  contains second
derivatives of the height, since
\begin{equation*}
 \DeltaH\zeta=z\DeltaH H_0+E\DeltaH(H-H_0).
\end{equation*}
However, the height equation
provides no smoothing. This creates a derivative
loss when estimating the coupled system. 
In contrast to Liu, Titi, and Xin~\cite[Remarks~1-2]{LTX},
who replace the full transformed viscosity by a linearized
operator supplemented with a correction restoring momentum
conservation, we retain the complete viscous contribution
and address the resulting derivative loss through a
Nash-Moser scheme based on tame estimates for the full
linearized problem.

\section{Nash-Moser iteration for the transformed problem}
\label{sec:nash-moser}

Our aim is to apply Nash-Moser directly to \eqref{eq:reduced-problem}
subject to \eqref{eq:transformed-boundary}. To do so, we first explain the main challenges of this approach on a toy model which discards all terms that do not cause immediate difficulties.

\subsection[The toy model]{The toy model corresponding to \eqref{eq:reduced-problem}
subject to \eqref{eq:transformed-boundary}}
\label{sec:toy-model}

We retain the height Hessian and the
height-gradient boundary term. For simplicity of the representation we also replace the Lam\'e type operator by the Laplacian. This gives the toy system
\begin{equation}\label{eq:toy-nonlinear}
 \left\{\begin{aligned}
 H_t+\divH\int_0^1(1-z)V\,\dd z&=0
       &&\text{in }(0,T)\times\T^2,\\
 (1-z)(V_t+\nablaH H)-\Delta V&=-zV_z\DeltaH H
       &&\text{in }(0,T)\times\Omega,\\
 H(0)=H_0\ \text{ and }\ V(0)&=V_0,
 \end{aligned}\right.
\end{equation}
with boundary conditions
\begin{equation*}
 V_z|_{z=1}=\bigl[(\nablaH V)\nablaH H\bigr]|_{z=1}
 \ \text{ and }\ V_z|_{z=0}=0.
\end{equation*}
At one correction step, fix the background height $H=1$ and a prescribed smooth
$V$ satisfying $V_z|_{z=0}=0$.
The unknowns $h$ and $\delta V$ are the height and horizontal-velocity
corrections to this background.
Differentiating \eqref{eq:toy-nonlinear}
gives their equations with prescribed sources $g_1,g_2,g_3$ as
\begin{equation*}
 \left\{\begin{aligned}
 h_t+\divH\int_0^1(1-z)\delta V\,\dd z&=g_1,\\
 (1-z)(\delta V_t+\nablaH h)-\Delta\delta V
       &=-zV_z\DeltaH h+g_2,\\
 h(0)=0\ \text{ and }\ \delta V(0)&=0,
 \end{aligned}\right.
\end{equation*}
with boundary conditions
\begin{equation*}
 \delta V_z|_{z=1}=(\nablaH V)|_{z=1}\nablaH h+g_3
 \ \text{ and }\ \delta V_z|_{z=0}=0.
\end{equation*}
To cancel the second derivatives of $h$, we use the good-unknown
mechanism of Alinhac \cite{Alinhac} and define $\widetilde V$ by
\begin{equation*}
\widetilde V=\delta V-zV_z h.
\end{equation*}
The system in $(h,\widetilde V)$ then reads as
\begin{equation}\label{eq:toy-first-good}
 \left\{\begin{aligned}
 h_t+\divH\int_0^1(1-z)\widetilde V\,\dd z+\divH(bh)&=g_1,\\
 (1-z)(\widetilde V_t+\nablaH h)-\Delta \widetilde V
   & =g_2+(1-z)\beta\left(\divH\int_0^1(1-z)\widetilde V\,\dd z+\divH(bh)-g_1\right)\\
   &\quad+2(\nablaH\beta)\nablaH h+
             \bigl(\Delta\beta-(1-z)\beta_t\bigr)h,\\
 h(0)=0\ \text{ and }\ \widetilde V(0)&=0,
 \end{aligned}\right.
\end{equation}
with
\begin{equation*}
 \widetilde V_z|_{z=1}=(\nablaH V)|_{z=1}\nablaH h-\beta_z|_{z=1}h+g_3
 \ \text{ and }\ \widetilde V_z|_{z=0}=0.
\end{equation*}
Here $\beta$ and $b$ are smooth given by 
\begin{equation*}
 \beta=zV_z\ \text{ and }\ b=\int_0^1(1-z)\beta\,\dd z.
\end{equation*}
The height Hessian has disappeared, but testing momentum with $\widetilde V$ produces a boundary contribution given by
$\langle(\nablaH V)|_{z=1}\nablaH h,\widetilde V|_{z=1}\rangle$.
Its trace estimate requires $h\in\rH^{1/2}(\T^2)$. In \cite{MR}, the corresponding fractional height estimate follows
from the kinematic condition and the dissipative trace control of
the full velocity. Here the problematic term is
$\divH\int_0^1(1-z)\widetilde V\,\dd z$ in the height equation,
since the available dissipation only gives
\begin{equation*}
 \|\divH\int_0^1(1-z)\widetilde V\,\dd z
       \|_{\rL^2(\T^2)}
 \leq C\|\widetilde V\|_{\rH^1(\Omega)^2},
\end{equation*}
whereas a direct fractional transport estimate would require
this term in $\rH^{1/2}(\T^2)$. We therefore choose the operator
$B$ below so that the momentum equation rewrites this term as
$-B\partial_t\widetilde V$ plus terms that can be estimated in
$\rH^{1/2}(\T^2)$. The modified height
$\widetilde h=h-B\widetilde V$ then absorbs this time derivative,
allowing its fractional estimate to close together with the
velocity energy estimate.
To obtain this control, we define the bounded operator $B$ by
\begin{equation*}
B \colon \rH^s(\Omega)^2 \to \rH^{s+1}(\T^2), \quad U \mapsto 
 BU=\divH\int_0^1(1-z)
              (1-\Delta_N)^{-1}\bigl((1-z)U\bigr)\,\dd z,
\end{equation*}
for $s \geq 0$.
Here $\Delta_N$ denotes the Neumann Laplacian. Elliptic regularity gives
\begin{equation*}
 \|BU\|_{\rH^{s+1}(\T^2)}\leq C_s\|U\|_{\rH^s(\Omega)^2}
 \ \text{ and }\
 \|BU\|_{\rH^1(\T^2)}\leq C\|\sqrt{1-z}\,U\|_{\rL^2(\Omega)^2}.
\end{equation*}
We regard horizontal functions as independent of $z$ when applying $B$.
Now introduce the corrected height $\widetilde h$ by
\begin{equation*}
\widetilde h=h-B\widetilde V.
\end{equation*}
The inverse in $B$ imposes homogeneous Neumann conditions, whereas $\widetilde V$
has a nonzero top derivative. We therefore define the boundary lifting
$\mathcal Nq$ by
\begin{equation*}
 (1-\Delta)\mathcal Nq=0\quad\text{in }\Omega,\quad
 (\mathcal Nq)_z|_{z=0}=0
 \ \text{ and }\ (\mathcal Nq)_z|_{z=1}=q.
\end{equation*}
Every smooth $Y$ with $Y_z|_{z=0}=0$ consequently satisfies
\begin{equation*}
 Y=(1-\Delta_N)^{-1}\bigl((1-\Delta)Y\bigr)
                   +\mathcal N(Y_z|_{z=1}).
\end{equation*}
Since $B$ is independent of time, we have
$\widetilde h_t=h_t-B\widetilde V_t$. Applying the elliptic decomposition to
momentum and using the height equation therefore gives
\begin{equation*}
 \begin{aligned}
 &\quad\widetilde h_t+\divH(b\widetilde h)
 \\&=g_1+B\nablaH h-\divH(bB\widetilde V)
 -\divH\int_0^1(1-z)(1-\Delta_N)^{-1}\biggl[
 g_2+2(\nablaH\beta)\nablaH h
       +\bigl(\Delta\beta-(1-z)\beta_t\bigr)h\\
 &\quad+\widetilde V+(1-z)\beta\left(
       \divH\int_0^1(1-s)\widetilde V(x_\rH,s,t)\,\dd s
       +\divH(bh)-g_1\right)\biggr]\,\dd z\\
 &\quad-\divH\int_0^1(1-z)\mathcal N
       \bigl[(\nablaH V)|_{z=1}\nablaH h
             -\beta_z|_{z=1}h+g_3\bigr]\,\dd z.
 \end{aligned}
\end{equation*}
Here $h=\widetilde h+B\widetilde V$ and $\widetilde h(0)=0$. 
We now aim to estimate $\widetilde h$ in the fractional space $\rH^{1/2}(\T^2)$. The horizontal resolvent estimate, followed by Cauchy-Schwarz in $z$, gives
\begin{equation*}
 \|\divH\int_0^1(1-z)(1-\Delta_N)^{-1}F\,\dd z
       \|_{\rH^{1/2}(\T^2)}
 \leq C\|F\|_{\rL^2((0,1),\rH^{-1/2}(\T^2)^2)}.
\end{equation*}
For the lifting, integration in $z$ gives
\begin{equation*}
 (1-\DeltaH)\int_0^1(1-z)\mathcal N q\,\dd z
       =(\mathcal N q)|_{z=1}-(\mathcal N q)|_{z=0},
\end{equation*}
so that
\begin{equation*}
 \|\divH\int_0^1(1-z)\mathcal N q\,\dd z
       \|_{\rH^{1/2}(\T^2)}
 \leq C\|\mathcal N q\|_{\rH^1(\Omega)}
 \leq C\|q\|_{\rH^{-1/2}(\T^2)}.
\end{equation*}
Combining this with boundedness of the operator $B$
yields
\begin{equation}\label{eq:H1/2}
 \begin{aligned}
 \|\widetilde h_t+\divH(b\widetilde h)\|_{\rH^{1/2}(\T^2)}
 \leq C\bigl(\|h\|_{\rH^{1/2}(\T^2)}
       +\|\widetilde V\|_{\rH^1(\Omega)^2}
       +\|g_1\|_{\rH^{1/2}(\T^2)}
       +\|g_2\|_{\rL^2(\Omega)^2}
       +\|g_3\|_{\rH^{-1/2}(\T^2)^2}\bigr),
 \end{aligned}
\end{equation}
where the constant $C$ depends on prescribed background.
We now test momentum in \eqref{eq:toy-first-good} with $\widetilde V$ and the
height equation with $h$. Integration by parts cancels their pressure
and column terms and gives the exact identity
\begin{equation*}
 \begin{aligned}
 &\quad \frac12\frac{\dd}{\dd t}\left(
       \|\sqrt{1-z}\,\widetilde V\|_2^2+\|h\|_2^2\right)
       +\|\nabla \widetilde V\|_2^2\\
 &=\int_{\T^2}g_1h\,\dd x_\rH
       -\frac12\int_{\T^2}(\divH b)h^2\,\dd x_\rH+\int_{\T^2}
       \left[(\nablaH V)|_{z=1}\nablaH h
             -\beta_z|_{z=1}h+g_3\right]
       \cdot \widetilde V|_{z=1}\,\dd x_\rH\\
 &\quad+\int_\Omega
       \left[(1-z)\beta\left(
       \divH\int_0^1(1-s)\widetilde V(x_\rH,s,t)\,\dd s
       +\divH(bh)-g_1\right)+g_2\right]\cdot \widetilde V\,\dd x\\
 &\quad+\int_\Omega
       \left[2(\nablaH\beta)\nablaH h
       +\bigl(\Delta\beta-(1-z)\beta_t\bigr)h\right]
       \cdot \widetilde V\,\dd x.
 \end{aligned}
\end{equation*}
We estimate the terms on the right-hand side separately using H\"older's inequality. First observe that
\begin{equation*}
 |\int_{\T^2}g_1h\,\dd x_\rH|
 +\frac12|\int_{\T^2}(\divH b)h^2\,\dd x_\rH|
 \leq C\bigl(\|g_1\|_{\rL^2(\T^2)}^2
                    +\|h\|_{\rL^2(\T^2)}^2\bigr).
\end{equation*}
For the boundary integral, bounded multiplication by the smooth
coefficients, Sobolev duality and the trace theorem yield
\begin{equation*}
 \begin{aligned}
 |\int_{\T^2}
       \left[(\nablaH V)|_{z=1}\nablaH h
             -\beta_z|_{z=1}h+g_3\right]
       \cdot \widetilde V|_{z=1}\,\dd x_\rH|
\leq C\bigl(\|h\|_{\rH^{1/2}(\T^2)}
             +\|g_3\|_{\rH^{-1/2}(\T^2)^2}\bigr)
                         \|\widetilde V\|_{\rH^1(\Omega)^2}.
 \end{aligned}
\end{equation*}
Next, we estimate
\begin{equation*}
 |\int_\Omega(1-z)(\beta\cdot \widetilde V)
       \divH\int_0^1(1-s)\widetilde V(x_\rH,s,t)\,\dd s\,\dd x|
 \leq C\|\nabla \widetilde V\|_{\rL^2(\Omega)}
              \|\sqrt{1-z}\,\widetilde V\|_{\rL^2(\Omega)^2}.
\end{equation*}
For the remaining volume terms, we move the horizontal derivatives
of $h$ onto the smooth coefficients and $\widetilde V$ by integration by parts.
We consequently obtain
\begin{equation*}
 \begin{aligned}
 &\quad|\int_\Omega
       \left[(1-z)\beta\bigl(\divH(bh)-g_1\bigr)+g_2\right]
                        \cdot \widetilde V\,\dd x|
 +|\int_\Omega
       \left[2(\nablaH\beta)\nablaH h
       +\bigl(\Delta\beta-(1-z)\beta_t\bigr)h\right]
                        \cdot \widetilde V\,\dd x|\\
 &\leq C\bigl(\|h\|_{\rL^2(\T^2)}
             +\|g_1\|_{\rL^2(\T^2)}
             +\|g_2\|_{\rL^2(\Omega)^2}\bigr)
                        \|\widetilde V\|_{\rH^1(\Omega)^2}.
 \end{aligned}
\end{equation*}
To absorb the velocity factors, we use the weighted Poincar\'e
inequality
\begin{equation*}
 \|\widetilde V\|_{\rH^1(\Omega)^2}^2
 \leq C\bigl(\|\nabla \widetilde V\|_{\rL^2(\Omega)}^2
             +\|\sqrt{1-z}\,\widetilde V\|_{\rL^2(\Omega)^2}^2\bigr).
\end{equation*}
Combining the preceding bounds with Young's inequality and
$\rH^{1/2}(\T^2)\hookrightarrow\rL^2(\T^2)$ therefore gives
\begin{equation*}
 \begin{aligned}
 &\quad\frac12\frac{\dd}{\dd t}\left(
       \|\sqrt{1-z}\,\widetilde V\|_{\rL^2(\Omega)^2}^2
       +\|h\|_{\rL^2(\T^2)}^2\right)
       +\frac12\|\nabla \widetilde V\|_{\rL^2(\Omega)}^2\\
 &\leq C\bigl(
       \|\sqrt{1-z}\,\widetilde V\|_{\rL^2(\Omega)^2}^2
       +\|h\|_{\rH^{1/2}(\T^2)}^2
       +\|g_1\|_{\rL^2(\T^2)}^2
       +\|g_2\|_{\rL^2(\Omega)^2}^2
       +\|g_3\|_{\rH^{-1/2}(\T^2)^2}^2\bigr).
 \end{aligned}
\end{equation*}
Here $C$ depends on the prescribed background.
We now combine the preceding estimate with the fractional energy
estimate for $\widetilde h$. For this purpose, we define $E$ by
\begin{equation*}
 E=\frac12\|\sqrt{1-z}\,\widetilde V\|_{\rL^2(\Omega)}^2
       +\frac12\|h\|_{\rL^2(\T^2)}^2
       +\frac12\|\widetilde h\|_{\rH^{1/2}(\T^2)}^2.
\end{equation*}
Since $h=\widetilde h+B\widetilde V$, it holds that
\begin{equation*}
 \|h\|_{\rH^{1/2}(\T^2)}^2
 \leq 2\|\widetilde h\|_{\rH^{1/2}(\T^2)}^2
       +C\|B\widetilde V\|_{\rH^1(\T^2)}^2
 \leq C\bigl(\|\widetilde h\|_{\rH^{1/2}(\T^2)}^2
       +\|\sqrt{1-z}\,\widetilde V\|_{\rL^2(\Omega)}^2\bigr)
 \leq CE.
\end{equation*}
To differentiate the fractional energy, we express the fractional norm via the Bessel potential norm, using $(1-\DeltaH)^{1/4}$ so that
\begin{equation*}
 \begin{aligned}
 &\quad\frac12\frac{\dd}{\dd t}\|\widetilde h\|_{\rH^{1/2}(\T^2)}^2
 \\&=\int_{\T^2}(1-\DeltaH)^{1/4}\widetilde h\,
       (1-\DeltaH)^{1/4}
       \bigl(\widetilde h_t+\divH(b\widetilde h)\bigr)\,\dd x_\rH-\int_{\T^2}(1-\DeltaH)^{1/4}\widetilde h\,
       (1-\DeltaH)^{1/4}\divH(b\widetilde h)\,\dd x_\rH.
 \end{aligned}
\end{equation*}
For the first integral, H\"older's inequality directly gives
\begin{equation*}
 \begin{aligned}
 |\int_{\T^2}(1-\DeltaH)^{1/4}\widetilde h\,
       (1-\DeltaH)^{1/4}
       \bigl(\widetilde h_t+\divH(b\widetilde h)\bigr)\,\dd x_\rH|
 \leq\|\widetilde h\|_{\rH^{1/2}(\T^2)}
       \|\widetilde h_t+\divH(b\widetilde h)\|_{\rH^{1/2}(\T^2)}.
 \end{aligned}
\end{equation*}
For the second integral, we first expand
$\divH(b\widetilde h)=b\cdot\nablaH\widetilde h+(\divH b)\widetilde h$.
To separate the transport term, we define the commutator by
\begin{equation*}
 \bigl[(1-\DeltaH)^{1/4},b\cdot\nablaH\bigr]\widetilde h
 =(1-\DeltaH)^{1/4}(b\cdot\nablaH\widetilde h)
       -b\cdot\nablaH(1-\DeltaH)^{1/4}\widetilde h.
\end{equation*}
Since $(1-\DeltaH)^{1/4}$ commutes with horizontal derivatives,
integration by parts gives the exact identity
\begin{equation*}
 \begin{aligned}
 \int_{\T^2}(1-\DeltaH)^{1/4}\widetilde h\,
       (1-\DeltaH)^{1/4}\divH(b\widetilde h)\,\dd x_\rH
 &=-\frac12\int_{\T^2}(\divH b)
       |(1-\DeltaH)^{1/4}\widetilde h|^2\,\dd x_\rH\\
 &\quad+\int_{\T^2}(1-\DeltaH)^{1/4}\widetilde h\,
       \bigl[(1-\DeltaH)^{1/4},b\cdot\nablaH\bigr]\widetilde h
       \,\dd x_\rH\\
 &\quad+\int_{\T^2}(1-\DeltaH)^{1/4}\widetilde h\,
       (1-\DeltaH)^{1/4}\bigl((\divH b)\widetilde h\bigr)\,\dd x_\rH.
 \end{aligned}
\end{equation*}
Since $b$ is smooth, the commutator
$\bigl[(1-\DeltaH)^{1/4},b\cdot\nablaH\bigr]$
is a pseudodifferential operator of order $1/2$. It then holds that
\begin{equation*}
 \|\bigl[(1-\DeltaH)^{1/4},b\cdot\nablaH\bigr]\widetilde h
       \|_{\rL^2(\T^2)}
 \leq C_b\|\widetilde h\|_{\rH^{1/2}(\T^2)},
\end{equation*}
where $C_b$ depends on finitely many smooth norms of $b$.
We refer to \cite[Chapter~I, Sections~2, 3,~8]{TaylorShortCourse}
for the commutator calculus, the Sobolev mapping properties and
their extension to compact manifolds.
For the remaining multiplication term, H\"older's inequality
and the product rule give
\begin{equation*}
 \begin{aligned}
 \|(\divH b)\widetilde h\|_{\rL^2(\T^2)}
 \leq \|\divH b\|_{\rL^\infty(\T^2)}
                         \|\widetilde h\|_{\rL^2(\T^2)} \ \text{ and } \
 \|(\divH b)\widetilde h\|_{\rH^1(\T^2)}
 \leq C\|\divH b\|_{\rW^{1,\infty}(\T^2)}
                         \|\widetilde h\|_{\rH^1(\T^2)}
 \end{aligned}
\end{equation*}
so that by interpolation, one has
\begin{equation*}
 \begin{aligned}
 \|(1-\DeltaH)^{1/4}
       \bigl((\divH b)\widetilde h\bigr)\|_{\rL^2(\T^2)}
 &=\|(\divH b)\widetilde h\|_{\rH^{1/2}(\T^2)}\\
 &\leq C\|\divH b\|_{\rW^{1,\infty}(\T^2)}
                         \|\widetilde h\|_{\rH^{1/2}(\T^2)}
 \leq C_b\|\widetilde h\|_{\rH^{1/2}(\T^2)}.
 \end{aligned}
\end{equation*}
These bounds control all three terms in the preceding identity
and give
\begin{equation*}
 |\int_{\T^2}(1-\DeltaH)^{1/4}\widetilde h\,
       (1-\DeltaH)^{1/4}\divH(b\widetilde h)\,\dd x_\rH|
 \leq C_b
                    \|\widetilde h\|_{\rH^{1/2}(\T^2)}^2.
\end{equation*}
Combining the estimates for the two integrals therefore yields
\begin{equation*}
 \frac12\frac{\dd}{\dd t}\|\widetilde h\|_{\rH^{1/2}(\T^2)}^2
 \leq C_b
                    \|\widetilde h\|_{\rH^{1/2}(\T^2)}^2
 +\|\widetilde h\|_{\rH^{1/2}(\T^2)}
       \|\widetilde h_t+\divH(b\widetilde h)\|_{\rH^{1/2}(\T^2)}
\end{equation*}
and from \eqref{eq:H1/2} it follows that
\begin{equation*}
 \begin{aligned}
 &\quad\|\widetilde h\|_{\rH^{1/2}(\T^2)}
       \|\widetilde h_t+\divH(b\widetilde h)\|_{\rH^{1/2}(\T^2)}
\\& \leq C\big (E+\sqrt E\,\|\widetilde V\|_{\rH^1(\Omega)}
       +\sqrt E\bigl(
          \|g_1\|_{\rH^{1/2}(\T^2)}
         +\|g_2\|_{\rL^2(\Omega)}
         +\|g_3\|_{\rH^{-1/2}(\T^2)}\bigr)\big ).
 \end{aligned}
\end{equation*}
The weighted Poincar\'e inequality gives
$\|\widetilde V\|_{\rH^1(\Omega)}
 \leq C(\|\nabla \widetilde V\|_{\rL^2(\Omega)}+\sqrt E)$.
Young's inequality therefore yields
\begin{equation*}
 \begin{aligned}
 \frac12\frac{\dd}{\dd t}\|\widetilde h\|_{\rH^{1/2}(\T^2)}^2
 \leq\frac14\|\nabla \widetilde V\|_{\rL^2(\Omega)}^2+C\big (E
 +\bigl(\|g_1\|_{\rH^{1/2}(\T^2)}^2
         +\|g_2\|_{\rL^2(\Omega)}^2
         +\|g_3\|_{\rH^{-1/2}(\T^2)}^2\bigr)\big ).
 \end{aligned}
\end{equation*}
Adding this inequality to the preceding estimate for $(h,\widetilde V)$,
and using $\|h\|_{\rH^{1/2}(\T^2)}^2\leq CE$, gives
\begin{equation*}
 E'(t)+\frac14\|\nabla \widetilde V\|_{\rL^2(\Omega)}^2
 \leq CE(t)+C\bigl(
          \|g_1\|_{\rH^{1/2}(\T^2)}^2
         +\|g_2\|_{\rL^2(\Omega)}^2
         +\|g_3\|_{\rH^{-1/2}(\T^2)}^2\bigr).
\end{equation*}
Here the constants depend on finitely many smooth norms of the prescribed
background $V$. This is the starting point of the tame estimate for the Nash-Moser scheme. From here it has to be upgraded to arbitrary Sobolev regularity. The key steps described above now have to be translated to the full transformed problem  \eqref{eq:reduced-problem}
subject to \eqref{eq:transformed-boundary} which is essentially carried out in the remainder of this article.

\subsection[The linear correction problem]{The Nash-Moser scheme for \eqref{eq:reduced-problem}
subject to \eqref{eq:transformed-boundary}}
We work on a fixed interval $[0,T]$. Here we denote by $n$ the iteration step and write $H=H_n$ and $V=V_n$. Moreover, we denote
the height and velocity corrections by $h$ and $\delta V$, so that
\begin{equation*}
    H_{n+1} = H + h \ \text{ and } \ V_{n+1} = V + \delta V.
\end{equation*}
For simplicity of the notation, we define the vertical average $\mathcal Q$, acting componentwise on
smooth scalar, vector and matrix fields $Y$, by
\begin{equation}\label{eq:good-column}
 \mathcal QY=\int_0^1Y\,\dd z.
\end{equation}
For the viscous terms in the interior and at the boundary, we
define $\mathcal S_HY$ and $\mathcal T_HY$ by
\begin{equation}\label{eq:good-stress-flux}
 \begin{aligned}
 \mathcal S_HY&=
 \mu\left(\nablaH Y+(\nablaH Y)^\top
       -\frac{Y_z\otimes\nablaH\zeta
                     +\nablaH\zeta\otimes Y_z}{J}\right)
 +(\mu'-\mu)\left(\divH Y
       -\frac{\nablaH\zeta\cdot Y_z}{J}\right)I_2 \ \text{ and }\\
 \mathcal T_HY&=\frac\mu JY_z-(\mathcal S_HY)\nablaH\zeta.
 \end{aligned}
\end{equation}
In this notation, the full transformed viscosity is
$\divH(J\mathcal S_HY)+\partial_z\mathcal T_HY$, and the
top boundary remainder satisfies
\begin{equation*}
 f_3(H,Y)=\left[\mu Y_z-\mathcal T_HY\right]_{z=1}.
\end{equation*}

We define $\delta f_i=Df_i(H,V)[h,\delta V]$ for $i=1,2,3$.
Using \eqref{eq:good-column}, linearizing
\eqref{eq:reduced-problem} gives
\begin{equation}\label{eq:raw-linearized-system}
 \left\{\begin{aligned}
 h_t+\frac2{H_0}\divH\mathcal Q(Q_0\delta V)
        &=g_1+\delta f_1 &&\text{in }(0,T)\times\T^2,\\
 Q_0(\delta V_t+\nablaH h)-\mu\Delta\delta V
                         -\mu'\nablaH\divH\delta V
        &=g_2+\delta f_2 &&\text{in }(0,T)\times\Omega,\\
 h(0)=0\ \text{ and }\ \delta V(0)&=0.
 \end{aligned}\right.
\end{equation}
Its boundary conditions are
\begin{equation}\label{eq:raw-linearized-boundary}
 \mu\dz\delta V|_{z=1}=g_3+\delta f_3
 \ \text{ and }\
 \dz\delta V|_{z=0}=g_4.
\end{equation}
To calculate the right-hand sides, we use
\begin{equation}\label{eq:raw-coefficient-variations}
 \delta\zeta=Eh,\quad\delta J=\partial_zEh
 \ \text{ and }\
 \delta Q=\frac{H-\zeta}{2}\partial_zEh+\frac J2(h-Eh).
\end{equation}
Differentiating \eqref{eq:transformed-f1} and using
$L=-2H^{-1}\divH\mathcal Q(QV)$ yields
\begin{equation*}
 \delta f_1=\frac2{H_0}\divH\mathcal Q(Q_0\delta V)
       -\frac2H\divH\mathcal Q(\delta Q\,V+Q\delta V)
       -\frac LHh
\end{equation*}
and using the notation in \eqref{eq:good-stress-flux}, differentiation
of \eqref{eq:transformed-f2} gives
\begin{equation}\label{eq:raw-df2}
 \begin{aligned}
 \delta f_2={}&(Q_0-Q)(\delta V_t+\nablaH h)
       -\delta Q(V_t+V\cdot\nablaH V+\nablaH H)
       -Q\bigl(\delta V\cdot\nablaH V+V\cdot\nablaH\delta V\bigr)\\
 &-\delta\mathcal B\,V_z-\mathcal B\,\delta V_z
       +\divH(J\mathcal S_H\delta V)+\partial_z\mathcal T_H\delta V
       -\mu\Delta\delta V-\mu'\nablaH\divH\delta V\\
 &+\divH\bigl(\delta J\,\mathcal S_HV
                   +J(\delta_H\mathcal S_H[h])V\bigr)
       +\partial_z\bigl((\delta_H\mathcal T_H[h])V\bigr).
 \end{aligned}
\end{equation}
Similarly, differentiating the top boundary remainder gives
\begin{equation*}
 \delta f_3=\left[
       \mu\delta V_z-\mathcal T_H\delta V
                      -(\delta_H\mathcal T_H[h])V
       \right]|_{z=1}.
\end{equation*}
Here $\delta_H\mathcal S_H[h]$ and
$\delta_H\mathcal T_H[h]$ denote the variations of the
coefficients with the argument $Y$ held fixed. By
\eqref{eq:raw-coefficient-variations}, they are explicitly given by
\begin{equation*}
 \begin{aligned}
 (\delta_H\mathcal S_H[h])Y&=
 -\frac\mu J\bigl(Y_z\otimes\nablaH Eh
                            +\nablaH Eh\otimes Y_z\bigr)
 +\frac{\mu\delta J}{J^2}
       \bigl(Y_z\otimes\nablaH\zeta+\nablaH\zeta\otimes Y_z\bigr)\\
 &\quad+(\mu'-\mu)\left(-\frac{\nablaH Eh\cdot Y_z}{J}
             +\frac{\delta J}{J^2}\nablaH\zeta\cdot Y_z\right)I_2 \ \text{ and }\\
 (\delta_H\mathcal T_H[h])Y
 &=-\frac{\mu\delta J}{J^2}Y_z
       -\bigl((\delta_H\mathcal S_H[h])Y\bigr)\nablaH\zeta
       -(\mathcal S_HY)\nablaH Eh.
 \end{aligned}
\end{equation*}
To make sense of every term in \eqref{eq:raw-df2}, we also differentiate $L$ to obtain
\begin{equation*}
 \delta L=-\frac2H\divH\mathcal Q(\delta Q\,V+Q\delta V)
                         -\frac LHh.
\end{equation*}
so that by \eqref{eq:transformed-diagnostic} we find 
\begin{equation*}
 \begin{aligned}
 \delta\mathcal B=-\int_0^z\biggl[&
       \frac{h-Eh}{2}\partial_zEL
       +\frac{L-EL}{2}\partial_zEh
       +\frac{H-\zeta}{2}\partial_zE(\delta L)\\
       &+\frac J2\bigl(\delta L-E(\delta L)\bigr)
       +\divH(\delta Q\,V+Q\delta V)
       \biggr](x_\rH,s,t)\,\dd s.
 \end{aligned}
\end{equation*}
The pressure and flux identities at the boundary also give
$\delta P|_{z=1}=0$ and
$\delta\mathcal B|_{z=0}=\delta\mathcal B|_{z=1}=0$.
To express the correction problem compactly, we define the
nonlinear residual $\mathcal F = (\mathcal{F}_H, \mathcal F_V, \mathcal F_b)^\top$ by
\begin{equation}\label{eq:full-residual}
 \mathcal F(H,V)=
 \begin{pmatrix}
 H_t+\dfrac2{H_0}\divH\mathcal Q(Q_0V)-f_1(H,V)\\[1mm]
 Q_0(V_t+\nablaH H)-\mu\Delta V
                         -\mu'\nablaH\divH V-f_2(H,V)\\[1mm]
 \mu\dz V|_{z=1}-f_3(H,V)\\[1mm]
 \dz V|_{z=0}
 \end{pmatrix}.
\end{equation}
Then it holds that
\begin{equation*}
 \begin{aligned}
 \mathcal F_H(H,V)=H_t-L\ \text{ and }\
 \mathcal F_V(H,V)
 =Q(V_t+V\cdot\nablaH V+\nablaH H)+\mathcal B V_z
       -\divH(J\mathcal S_HV)-\partial_z\mathcal T_HV.
 \end{aligned}
\end{equation*}
For an unsmoothed Newton
step, the sources are given by
\begin{equation*}
(g_{1,n},g_{2,n},g_{3,n},g_{4,n})^\top
 =-\mathcal F(H_n,V_n).
\end{equation*}
The corresponding
Nash-Moser iteration of\eqref{eq:raw-linearized-system} subject to
\eqref{eq:raw-linearized-boundary} takes the form
\begin{equation*}
 D\mathcal F(H_n,V_n)[h,\delta V]
 =(g_{1,n},g_{2,n},g_{3,n},g_{4,n})^\top,
 \quad h(0)=0\ \text{ and }\ \delta V(0)=0.
\end{equation*}
The coefficient variations in \eqref{eq:raw-df2} still produce
second derivatives of $Eh$, including
\begin{equation*}
 -\mu\DeltaH(Eh)V_z-\mu'\nablaH^2(Eh)V_z.
\end{equation*}
As in the toy model, we cancel those terms using Alinhac's good
unknown \cite{Alinhac}, which we define by
\begin{equation}\label{eq:good-unknown}
 \widetilde V=\delta V-\frac{Eh}{J}V_z.
\end{equation}
Substituting \eqref{eq:good-unknown} into
\eqref{eq:raw-linearized-system} subject to
\eqref{eq:raw-linearized-boundary}, and using the height equation
to eliminate $h_t$ from the momentum remainder, gives
\begin{equation}\label{eq:off-exact-momentum}
 \left\{\begin{aligned}
 h_t+\frac2{H_0}\divH\mathcal Q(Q_0\widetilde V)
 &=\widetilde g_1+\widetilde f_1
       &&\text{in }(0,T)\times\T^2,\\
 Q_0(\widetilde V_t+\nablaH h)-\mu\Delta\widetilde V
                         -\mu'\nablaH\divH\widetilde V
 &=\widetilde g_2+\widetilde f_2
       &&\text{in }(0,T)\times\Omega,\\
 h(0)=0\ \text{ and }\ \widetilde V(0)&=0.
 \end{aligned}\right.
\end{equation}
Its boundary conditions are
\begin{equation}\label{eq:good-boundary}
 \mu\dz\widetilde V|_{z=1}=\widetilde g_3+\widetilde f_3
 \ \text{ and }\
 \dz\widetilde V|_{z=0}=\widetilde g_4+\widetilde f_4.
\end{equation}
The prescribed sources are now given by
\begin{equation*}
 \widetilde g_1=g_1,\quad
 \widetilde g_2=g_2-\frac QJ(Eg_1)V_z,\quad
 \widetilde g_3=g_3
 \ \text{ and }\
 \widetilde g_4=g_4.
\end{equation*}
Moreover, the terms $\widetilde f_i$ are given by
\begin{equation*}
 \widetilde f_1
 =\frac2{H_0}\divH\mathcal Q(Q_0\widetilde V)
  -\frac2H\divH\mathcal Q\left(Q\widetilde V+\frac{Jh}{2}V\right)
  -\frac LHh.
\end{equation*}
and
\begin{equation}\label{eq:good-g2}
 \begin{aligned}
 \widetilde f_2={}&
 (Q_0-Q)(\widetilde V_t+\nablaH h)
 +\divH(J\mathcal S_H\widetilde V)
 +\partial_z\mathcal T_H\widetilde V
 -\mu\Delta\widetilde V-\mu'\nablaH\divH\widetilde V\\
 &-Q\bigl(V\cdot\nablaH\widetilde V
                  +\widetilde V\cdot\nablaH V\bigr)
       -\mathcal B\widetilde V_z
       -\frac{Jh}{2}\bigl(V_t+V\cdot\nablaH V+\nablaH H\bigr)\\
 &-\biggl[
       \frac\zeta H\divH\mathcal Q\left(Q\widetilde V+\frac{Jh}{2}V\right)
       -\divH\int_0^z\left(Q\widetilde V+\frac{Jh}{2}V\right)
                           (x_\rH,s,t)\,\dd s
       +\frac h2\left(\frac{\zeta L}{H}-EL\right)
       \biggr]V_z\\
 &+\frac{QEh}{J^2}\partial_z\bigl(E\mathcal F_H(H,V)\bigr)V_z
       -\partial_z\left[\frac{Eh}{J}\mathcal F_V(H,V)\right],
 \end{aligned}
\end{equation}
as well as 
\begin{equation*}
 \begin{aligned}
 \widetilde f_3=\left[
       \mu\widetilde V_z-\mathcal T_H\widetilde V
       +J(\mathcal S_HV)\nablaH\left(\frac{Eh}{J}\right)
       -\frac{Eh}{J}\partial_z\mathcal T_HV
       \right]|_{z=1}\ \text{ and }\
 \widetilde f_4=-\left[\frac{\partial_zEh}{J}V_z\right]|_{z=0}.
 \end{aligned}
\end{equation*} 
Note that in
\eqref{eq:good-g2}, the residuals $\mathcal F_H(H,V)$ and
$\mathcal F_V(H,V)$ are prescribed background functions.
The cancellation of the second derivatives of $Eh$ follows
from
\begin{equation*}
 \begin{aligned}
 \delta\bigl(\divH(J\mathcal S_HV)+\partial_z\mathcal T_HV\bigr)
 =\divH(J\mathcal S_H\widetilde V)
       +\partial_z\mathcal T_H\widetilde V
       +\partial_z\left[
          \frac{Eh}{J}
          \bigl(\divH(J\mathcal S_HV)+\partial_z\mathcal T_HV\bigr)
          \right].
 \end{aligned}
\end{equation*}
To express the new correction problem in the same form, we
define the linear operator $\widetilde{\mathcal F}(H,V)$ by
\begin{equation*}
 \widetilde{\mathcal F}(H,V)[h,\widetilde V]
 =\begin{pmatrix}
 h_t+\dfrac2{H_0}\divH\mathcal Q(Q_0\widetilde V)-\widetilde f_1\\[1mm]
 Q_0(\widetilde V_t+\nablaH h)-\mu\Delta\widetilde V
                  -\mu'\nablaH\divH\widetilde V-\widetilde f_2\\[1mm]
 \mu\dz\widetilde V|_{z=1}-\widetilde f_3\\[1mm]
 \dz\widetilde V|_{z=0}-\widetilde f_4
 \end{pmatrix}.
\end{equation*}
Since $J>0$, the substitution
$\delta V=\widetilde V+(Eh)V_z/J$ is reversible. Together
with the stated change of sources, it gives an equivalent
linear correction problem. At the $n$-th step, this problem
therefore takes the form
\begin{equation*}
 \widetilde{\mathcal F}(H_n,V_n)[h,\widetilde V]
 =\begin{pmatrix}
 \widetilde g_{1,n}\\\widetilde g_{2,n}\\
 \widetilde g_{3,n}\\\widetilde g_{4,n}
 \end{pmatrix}
 =\begin{pmatrix}
 g_{1,n}\\
 g_{2,n}-\dfrac{Q_n}{J_n}(Eg_{1,n})\partial_zV_n\\
 g_{3,n}\\g_{4,n}
 \end{pmatrix},
 \quad h(0)=0\ \text{ and }\ \widetilde V(0)=0.
\end{equation*}
In analogy with the toy model, we introduce a modified height in order to establish the basic tame energy estimate. For this
purpose, we first define the shifted viscous operator
$\mathcal L_H$ at a prescribed smooth background height $H$
by 
\begin{equation*}
    \begin{aligned}
   \mathcal L_H \colon \mathrm D(\mathcal L_H)    & \to \rH^s(\Omega)^2 \ \text{ with } \  \mathrm D(\mathcal L_H)=
 \left\{U\in\rH^{s+2}(\Omega)^2 \colon
       \mathcal T_HU|_{z=1}=0,\ U_z|_{z=0}=0\right\} \\
 U& \mapsto\mathcal L_HU
       =-\divH(J\mathcal S_HU)-\partial_z\mathcal T_HU+QU.
    \end{aligned}
\end{equation*}
for $s \geq 0$.
The coefficient $Q$ is positive in the interior and vanishes at the top.
To justify the inverse, we define its form by
\begin{equation}\label{eq:form}
 a_H(U,Y)=\int_\Omega\left[
 J\operatorname{tr}\bigl((\mathcal S_HU)^\top\nablaH Y\bigr)
       +\mathcal T_HU\cdot Y_z+QU\cdot Y\right]\,\dd x.
\end{equation}
The physical strain form, Korn's inequality and the positive interior
weight give
\begin{equation*}
 |a_H(U,Y)|\leq C\|U\|_{\rH^1(\Omega)^2}\|Y\|_{\rH^1(\Omega)^2}
 \ \text{ and }\
 a_H(U,U)\geq c\|U\|_{\rH^1(\Omega)^2}^2.
\end{equation*}
Smooth conormal
elliptic regularity, see \cite{ADN}, then gives
\begin{equation*}
 \mathcal L_H^{-1}\in
 \mathcal L\bigl(\rH^s(\Omega)^2,\rH^{s+2}(\Omega)^2\bigr),
 \quad s\geq0.
\end{equation*}
With this in mind, we define the analogue of the operator $B$ in the toy problem $B_H$ by 
\begin{equation*}
B_H \colon \rH^{s}(\Omega)^2 \to \rH^{s+1}(\T^2), \quad 
 U \mapsto B_HU
       =\frac2H\divH\mathcal Q\bigl(Q\mathcal L_H^{-1}(QU)\bigr).
\end{equation*}
Vertical integration is bounded at every nonnegative Sobolev order,
and horizontal divergence lowers the order by one. Thus
\begin{equation*}
 \|B_HU\|_{\rH^{s+1}(\T^2)}
 \leq C_s\|\mathcal L_H^{-1}(QU)\|_{\rH^{s+2}(\Omega)^2}
 \leq C_s\|U\|_{\rH^s(\Omega)^2}.
\end{equation*}
At $s=0$, the same estimate and $\|QU\|_2\leq C\|\sqrt Q\,U\|_2$
give the following bounds.

\begin{cor}\label{lem:height-correction-operator}
Suppose that the prescribed background $H$ is smooth and that $T>0$ is chosen small enough such that 
$J$ and $Q/(1-z)$ are bounded away from zero. Then, for every
$s\geq0$, the operator $B_H$ is bounded from
$\rH^s(\Omega)^2$ to $\rH^{s+1}(\T^2)$ and satisfies
\begin{equation*}
 \|B_HU\|_{\rH^1(\T^2)}
       \leq C\|\sqrt Q\,U\|_{\rL^2(\Omega)^2}.
\end{equation*}
The constants depend on finitely many background norms and
the positive chart bounds.
\end{cor}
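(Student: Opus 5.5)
The plan is to factor $B_H$ into three bounded maps and read off each norm bound. Write
\begin{equation*}
 B_HU=\frac2H\,\divH\mathcal Q\bigl(Q\,\mathcal L_H^{-1}(QU)\bigr).
\end{equation*}
The order of steps is: multiplication by $Q$, then $\mathcal L_H^{-1}$, then multiplication by $Q$, then vertical averaging, then horizontal divergence, and finally multiplication by the smooth positive function $2/H$.

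\textbf{Step 1: coefficients.} Since $H$ is a prescribed smooth background and $T$ is chosen so that $J\geq H_\ast/2$ and $Q/(1-z)\geq H_\ast^2/8$, the coefficients $J$, $1/J$, $\nablaH\zeta$, $Q$ and $1/H$ are smooth on $\overline\Omega$. Their $\rC^k$ norms are controlled by finitely many background norms together with the chart bounds. Multiplication by $Q$ is therefore bounded on every $\rH^s(\Omega)$.

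\textbf{Step 2: the elliptic inverse.} This is the main point. I need $\mathcal L_H^{-1}\colon\rH^s\to\rH^{s+2}$ with constants depending only on those finitely many norms.
\begin{itemize}
\item At the $\rH^1$ level, I use Lax--Milgram for the form $a_H$ in \eqref{eq:form}. Boundedness and coercivity come from Korn's inequality together with the positive interior weight. The coercivity constant depends only on the $\rL^\infty$ bounds of $J^{\pm1}$ and $\nablaH\zeta$, and on the lower bound of $Q/(1-z)$. Here $\int Q|U|^2\geq c\int(1-z)|U|^2$, and this combines with $\|\nabla U\|_2^2$ through the weighted Poincar\'e inequality of the toy model.
\item For higher regularity, the argument is standard. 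Horizontal derivatives are handled by difference quotients, which is possible because the boundary is flat and the torus is periodic. Normal derivatives are recovered from the equation, using the conormal boundary condition $\mathcal T_HU|_{z=1}=0$ and the condition at the bottom. This is the ADN regularity cited in the paper. The constant in the $\rH^{s+2}$ estimate depends on $\rC^{s+1}$-type norms of the coefficients, and hence on finitely many background norms.
\end{itemize}

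\textbf{Step 3: averaging and divergence.} The vertical average $\mathcal Q$ maps $\rH^{s+2}(\Omega)$ boundedly into $\rH^{s+2}(\T^2)$. This follows from Cauchy--Schwarz in $z$ applied to horizontal derivatives of order up to $s+2$, with interpolation for non-integer $s$. Then $\divH$ maps $\rH^{s+2}(\T^2)$ into $\rH^{s+1}(\T^2)$, and multiplication by the smooth function $2/H$ preserves this space. Chaining Steps 1--3 gives
\begin{equation*}
 \|B_HU\|_{\rH^{s+1}(\T^2)}\leq C_s\|U\|_{\rH^s(\Omega)^2}.
\end{equation*}
This is in fact better than stated, since $B_H$ gains one derivative.

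\textbf{Step 4: the weighted bound at $s=0$.} I apply the chain above with $s=0$ to the input $QU$, placing it directly into $\mathcal L_H^{-1}$:
\begin{equation*}
 \|B_HU\|_{\rH^1(\T^2)}\leq C\|\mathcal L_H^{-1}(QU)\|_{\rH^2(\Omega)^2}\leq C\|QU\|_{\rL^2(\Omega)^2}.
\end{equation*}
Since $0\leq Q\leq\|Q\|_\infty$, we have $|QU|=\sqrt Q\,\sqrt Q\,|U|\leq\|Q\|_\infty^{1/2}\sqrt Q\,|U|$ pointwise. Hence $\|QU\|_2\leq C\|\sqrt Q\,U\|_2$, which gives the stated bound.

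\textbf{Main obstacle.} The only delicate point is the uniform $\rH^2$ regularity in Step 2 for the degenerate-weighted operator. The zeroth-order term $QU$ vanishes at $z=1$, so it gives no pointwise lower bound there. Coercivity must therefore come from the gradient part combined with the interior weight, through the weighted Poincar\'e inequality. The principal part, however, is uniformly elliptic, because $J$ is bounded below. The degeneracy of $Q$ therefore affects only a lower-order term and does not obstruct the ADN estimate.
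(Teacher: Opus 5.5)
Your proposal is correct and follows the paper's route. You factor $B_H$ through $\mathcal L_H^{-1}\in\mathcal L(\rH^s(\Omega)^2,\rH^{s+2}(\Omega)^2)$, using coercivity of $a_H$ from Korn's inequality and the positive interior weight followed by conormal ADN regularity; vertical averaging then preserves the Sobolev order, $\divH$ lowers it by one, and at $s=0$ the bound $\|QU\|_{\rL^2}\leq C\|\sqrt Q\,U\|_{\rL^2}$ closes the argument (your remark that the chained $\rH^s\to\rH^{s+1}$ bound is ``better than stated'' is a slip, since it is exactly the stated gain).
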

We now define the modified height $\widetilde h$ by
\begin{equation}\label{eq:modified-height}
 \widetilde h=h-B_H\widetilde V.
\end{equation}
To compactly write its equation we define several helpful quantities. First set $b_1$ and $b_2$ to be 
\begin{equation*}
 b_1=\frac1H\mathcal Q(JV)
 \ \text{ and }\
  b_2 =\frac1H\divH\mathcal Q(JV)+\frac LH,
\end{equation*}
so that the height equation in \eqref{eq:off-exact-momentum} becomes
\begin{equation*}
 h_t+b_1\cdot\nablaH h+ b_2 h
       +\frac2H\divH\mathcal Q(Q\widetilde V)=g_1.
\end{equation*}
Next, define
$\mathcal G(h,U)$ by
\begin{equation*}
 \begin{aligned}
 \mathcal G(h,U)={}&\widetilde g_2+QU
       -Q\bigl(V\cdot\nablaH U+U\cdot\nablaH V\bigr)
       -\mathcal B U_z
       -\frac{Jh}{2}\bigl(V_t+V\cdot\nablaH V+\nablaH H\bigr)\\
 &-\biggl[
       \frac\zeta H\divH\mathcal Q\left(QU+\frac{Jh}{2}V\right)
       -\divH\int_0^z\left(QU+\frac{Jh}{2}V\right)
                           (x_\rH,s,t)\,\dd s
       +\frac h2\left(\frac{\zeta L}{H}-EL\right)
       \biggr]V_z\\
 &+\frac{QEh}{J^2}\partial_z\bigl(E\mathcal F_H(H,V)\bigr)V_z
       -\partial_z\left[\frac{Eh}{J}\mathcal F_V(H,V)\right].
 \end{aligned}
\end{equation*}
Thus the momentum equation reads
\begin{equation*}
 Q(\widetilde V_t+\nablaH h)+\mathcal L_H\widetilde V
       =\mathcal G(h,\widetilde V).
\end{equation*}
Its boundary values are given by 
\begin{equation*}
 \mathcal T_H\widetilde V|_{z=1}=G_1(h)
 \ \text{ and }\
 \widetilde V_z|_{z=0}=G_0(h),
\end{equation*}
where
\begin{equation*}
 \begin{aligned}
 G_1(h)=g_3+\left[
       J(\mathcal S_HV)\nablaH\left(\frac{Eh}{J}\right)
       -\frac{Eh}{J}\partial_z\mathcal T_HV\right]|_{z=1}
 \ \text{ and }\
 G_0(h)=g_4-\left[\frac{\partial_zEh}{J}V_z\right]|_{z=0}.
 \end{aligned}
\end{equation*}
Since $\mathcal L_H^{-1}$ uses homogeneous boundary
conditions, we retain these values by defining the boundary
lifting $\mathcal N_H(G_1,G_0)$ as the unique solution $Y$ of
\begin{equation*}
 \mathcal L_HY=0\quad\text{in }\Omega, \quad  \mathcal T_HY|_{z=1}=G_1
 \ \text{ and }\
 Y_z|_{z=0}=G_0.
\end{equation*}
Then it holds that
\begin{equation*}
 \widetilde V
 =\mathcal L_H^{-1}\bigl(
       \mathcal G(h,\widetilde V)-Q\nablaH h-Q\widetilde V_t\bigr)
       +\mathcal N_H(G_1(h),G_0(h)).
\end{equation*}
Differentiating \eqref{eq:modified-height} gives
$\widetilde h_t=h_t-B_H\widetilde V_t-(\partial_tB_H)\widetilde V$ and we find 
\begin{equation}\label{eq:modified-height-equation}
 \begin{aligned}
 \widetilde h_t+b_1\cdot\nablaH\widetilde h+ b_2 \widetilde h
 ={}&g_1+B_H\nablaH h
       -(\partial_tB_H)\widetilde V
       -b_1\cdot\nablaH(B_H\widetilde V)- b_2 B_H\widetilde V\\
 &-\frac2H\divH\mathcal Q\bigl(
       Q\mathcal L_H^{-1}\mathcal G(h,\widetilde V)\bigr)
       -\frac2H\divH\mathcal Q\bigl(
       Q\mathcal N_H(G_1(h),G_0(h))\bigr).
 \end{aligned}
\end{equation}
We now estimate the right-hand side of
the above equation in
$\rH^{1/2}(\T^2)$. 
In the following negative-order volume estimates, we use the convention
\begin{equation*}
 \rH^{-1/2}(\Omega)^2=(\rH^{1/2}(\Omega)^2)^*.
\end{equation*}
Thus the functional is specified on all tests, including those with
nonzero boundary values. The weak formulation of $\mathcal L_H$, coercivity and
standard elliptic regularity give
\begin{equation*}
 \mathcal L_H^{-1}\in
 \mathcal L\bigl((\rH^1(\Omega)^2)^*,\rH^1(\Omega)^2\bigr)
 \ \text{ and }\
 \mathcal L_H^{-1}\in
 \mathcal L\bigl(\rL^2(\Omega)^2,\rH^2(\Omega)^2\bigr).
\end{equation*}
Interpolating these bounds yields
\begin{equation*}
 \|\mathcal L_H^{-1}F\|_{\rH^{3/2}(\Omega)^2}
 \leq C\|F\|_{\rH^{-1/2}(\Omega)^2}
\end{equation*}
so that 
\begin{equation}\label{eq:basic-column-inverse1}
     \|\frac2H\divH\mathcal Q
 \bigl(Q\mathcal L_H^{-1}F\bigr)\|_{\rH^{1/2}(\T^2)}
 \leq C\|F\|_{\rH^{-1/2}(\Omega)^2}
\end{equation}
For the boundary lifting, the weak Neumann estimate gives
\begin{equation*}
 \|\mathcal N_H(G_1,G_0)\|_{\rH^1(\Omega)^2}
 \leq C\left(
 \|G_1\|_{\rH^{-1/2}(\T^2)^2}
 +\|G_0\|_{\rH^{-1/2}(\T^2)^2}\right).
\end{equation*}
However, the regularity of $\mathcal Q(Q\mathcal N_H(G_1,G_0))$ can be improved to $\rH^{3/2}$. This can be seen as follows.
Define the 
primitive $\Psi$ by
\begin{equation*}
 \Psi(x_\rH,z)=\int_z^1
 \bigl(Q\mathcal N_H(G_1,G_0)\bigr)(x_\rH,s)\,\dd s.
\end{equation*}
Consequently, we obtain
\begin{equation*}
 \partial_z\Psi=-Q\mathcal N_H(G_1,G_0),\quad
 \Psi|_{z=1}=0
 \ \text{ and }\
 \partial_z\Psi|_{z=0}
 =-\bigl[Q\mathcal N_H(G_1,G_0)\bigr]|_{z=0}
\end{equation*}
and the trace theorem therefore gives
\begin{equation*}
 \|\Psi\|_{\rH^1(\Omega)^2}
 +\|\partial_z\Psi|_{z=0}\|_{\rH^{1/2}(\T^2)^2}
 \leq C\|\mathcal N_H(G_1,G_0)\|_{\rH^1(\Omega)^2}.
\end{equation*}
To calculate the equation for $\Psi$, we define the tensor
$\mathcal C_\Psi$ by
\begin{equation*}
 \begin{aligned}
 \mathcal C_\Psi={}&\frac\mu2\left[
 \left(\int_z^1J\mathcal N_H(G_1,G_0)\,\dd s\right)
       \otimes\nablaH H
 +\nablaH H\otimes
 \left(\int_z^1J\mathcal N_H(G_1,G_0)\,\dd s\right)\right]\\
 &+\frac{\mu'-\mu}{2}
 \left(\nablaH H\cdot
 \int_z^1J\mathcal N_H(G_1,G_0)\,\dd s\right)I_2.
 \end{aligned}
\end{equation*}
Since $\mathcal L_H\mathcal N_H(G_1,G_0)=0$ and $R|_{z=1}=0$,
the product rule and integration by parts yield
\begin{equation*}
 \begin{aligned}
 \mathcal L_H\Psi={}&
 Q\Psi-\mu J\mathcal N_H(G_1,G_0)
 +\frac{\mu J}{2}\mathcal N_H(G_1,G_0)|_{z=1}
 -J\int_z^1QR\mathcal N_H(G_1,G_0)\,\dd s\\
 &-\frac J2\left(
 \int_z^1J\mathcal S_H\mathcal N_H(G_1,G_0)\,\dd s
 \right)\nablaH H
 -\divH(J\mathcal C_\Psi)
 +\partial_z(\mathcal C_\Psi\nablaH\zeta).
 \end{aligned}
\end{equation*}
Since the prescribed coefficients are smooth, we have
\begin{equation*}
 \|\mathcal C_\Psi\|_{\rH^1(\Omega)^{2\times2}}
 +\|\mathcal N_H(G_1,G_0)|_{z=1}\|_{\rL^2(\T^2)^2}
 \leq C\|\mathcal N_H(G_1,G_0)\|_{\rH^1(\Omega)^2}.
\end{equation*}
Every term in the displayed equation is therefore bounded
in $\rL^2(\Omega)^2$, and we obtain
\begin{equation*}
 \|\mathcal L_H\Psi\|_{\rL^2(\Omega)^2}
 \leq C\|\mathcal N_H(G_1,G_0)\|_{\rH^1(\Omega)^2}.
\end{equation*}
We now apply the elliptic estimate for the differential expression
$\mathcal L_H$ with zero Dirichlet data at $z=1$ and the
specified Neumann data at $z=0$. This gives
\begin{equation*}
 \begin{aligned}
 \|\Psi\|_{\rH^2(\Omega)^2}
 \leq C\left(
 \|\mathcal L_H\Psi\|_{\rL^2(\Omega)^2}
 +\|\partial_z\Psi|_{z=0}\|_{\rH^{1/2}(\T^2)^2}
 +\|\Psi\|_{\rH^1(\Omega)^2}\right)\leq C\|\mathcal N_H(G_1,G_0)\|_{\rH^1(\Omega)^2}.
 \end{aligned}
\end{equation*}
Since $\Psi|_{z=0}=\mathcal Q(Q\mathcal N_H(G_1,G_0))$,
the trace theorem and the weak Neumann estimate yield
\begin{equation*}
 \begin{aligned}
 \|\mathcal Q(Q\mathcal N_H(G_1,G_0))\|_{\rH^{3/2}(\T^2)^2}
 \leq C\left(
 \|G_1\|_{\rH^{-1/2}(\T^2)^2}
 +\|G_0\|_{\rH^{-1/2}(\T^2)^2}\right).
 \end{aligned}
\end{equation*}
The calculation is first made for smooth boundary data. Approximating
$G_1,G_0$ in $\rH^{-1/2}$, the weak estimate gives convergence of
the liftings in $\rH^1$, while the displayed estimate gives convergence
of their primitives in $\rH^2$. Thus the same conclusion holds for
all the stated boundary data. Consequently, we obtain
\begin{equation}\label{eq:basic-column-inverse2}
 \begin{aligned}
 \|\frac2H\divH\mathcal Q
 \bigl(Q\mathcal N_H(G_1,G_0)\bigr)\|_{\rH^{1/2}(\T^2)}
 \leq C\left(
 \|G_1\|_{\rH^{-1/2}(\T^2)^2}
 +\|G_0\|_{\rH^{-1/2}(\T^2)^2}\right).
 \end{aligned}
\end{equation}
Our aim is to now apply \eqref{eq:basic-column-inverse1} with $F = \mathcal G(h,\widetilde V)$ and for this purpose we estimate each term in $\mathcal G(h,\widetilde V)$ in $\rH^{-1/2}(\Omega)^2$.
For a prescribed smooth vector coefficient $\beta$ it holds that 
\begin{equation*}
 \begin{aligned}
 |\int_\Omega\beta\,\partial_{x_i}h\cdot\psi\,\dd x|
 =|\int_{\T^2}\partial_{x_i}h\,
          \mathcal Q(\beta\cdot\psi)\,\dd x_\rH|
 \leq C\|h\|_{\rH^{1/2}(\T^2)}
          \|\psi\|_{\rH^{1/2}(\Omega)^2}
 \end{aligned}
\end{equation*}
and it follows that
\begin{equation*}
 \|\beta\,\partial_{x_i}h\|_{\rH^{-1/2}(\Omega)^2}
 \leq C\|h\|_{\rH^{1/2}(\T^2)}.
\end{equation*}
After expanding the horizontal derivatives in $\mathcal G(h,\widetilde V)$,
all its remaining height terms are smooth coefficients times
$h$, $Eh$ or a first derivative of $Eh$, and hence belong to
$\rL^2(\Omega)^2$. Its velocity terms are bounded in
$\rL^2(\Omega)^2$ by $C\|\widetilde V\|_{\rH^1(\Omega)^2}$.
Using the formula for $\widetilde g_2$, we therefore obtain
\begin{equation*}
 \|\mathcal G(h,\widetilde V)\|_{\rH^{-1/2}(\Omega)^2}
 \leq C\big(
 \|h\|_{\rH^{1/2}(\T^2)}
 +\|\widetilde V\|_{\rH^1(\Omega)^2}
 +\|g_1\|_{\rH^{1/2}(\T^2)}
 +\|g_2\|_{\rL^2(\Omega)^2}\big).
\end{equation*}
At the top boundary, $\nablaH(Eh)|_{z=1}=\nablaH h$.
Recall the following bounds of the harmonic extension
\begin{equation*}
 \|Eh\|_{\rH^1(\Omega)}
 +\sum_{j=0}^1\|\partial_zEh|_{z=j}\|_{\rH^{-1/2}(\T^2)}
 \leq C\|h\|_{\rH^{1/2}(\T^2)}.
\end{equation*}
Using these bounds for the bottom correction and
$Eh|_{z=1}=h$ for the top correction gives
\begin{equation*}
 \|G_1(h)\|_{\rH^{-1/2}(\T^2)^2}
 +\|G_0(h)\|_{\rH^{-1/2}(\T^2)^2}
 \leq C\left(
 \|h\|_{\rH^{1/2}(\T^2)}
 +\|g_3\|_{\rH^{-1/2}(\T^2)^2}
 +\|g_4\|_{\rH^{-1/2}(\T^2)^2}\right).
\end{equation*}
Substituting these bounds into \eqref{eq:basic-column-inverse1}
and \eqref{eq:basic-column-inverse2} yields
\begin{equation*}
 \begin{aligned}
 &\quad\|\frac2H\divH\mathcal Q
 \bigl(Q\mathcal L_H^{-1}\mathcal G(h,\widetilde V)\bigr)
 \|_{\rH^{1/2}(\T^2)}
 +\|\frac2H\divH\mathcal Q
 \bigl(Q\mathcal N_H(G_1(h),G_0(h))\bigr)
 \|_{\rH^{1/2}(\T^2)}\\
 &\leq C\big(
 \|h\|_{\rH^{1/2}(\T^2)}
 +\|\widetilde V\|_{\rH^1(\Omega)^2}
 +\|g_1\|_{\rH^{1/2}(\T^2)}
 +\|g_2\|_{\rL^2(\Omega)^2}
 +\|g_3\|_{\rH^{-1/2}(\T^2)^2}
 +\|g_4\|_{\rH^{-1/2}(\T^2)^2}\big).
 \end{aligned}
\end{equation*}
It remains to estimate the time derivative of $B_H$.
Fix $t_\ast$ and write $Y(t)=\mathcal L_H(t)^{-1}(Q(t)\widetilde V(t_\ast))$. The input is held fixed, and all identities below are evaluated at $t=t_\ast$. Differentiating the
elliptic equation gives
\begin{equation*}
 \mathcal L_H\partial_tY
       =Q_t \widetilde V-(\partial_t\mathcal L_H)Y
       \quad\text{in }\Omega,
\end{equation*}
with boundary conditions
\begin{equation*}
 \mathcal T_H\partial_tY|_{z=1}
       =-\bigl((\partial_t\mathcal T_H)Y\bigr)|_{z=1}
 \ \text{ and }\
 \partial_z\partial_tY|_{z=0}=0.
\end{equation*}
Since $Q/(1-z)$ is smooth and bounded away from zero,
we have
\begin{equation*}
 Q_t=(1-z)\partial_t\left(\frac Q{1-z}\right)
 \ \text{ and }\ |Q_t|\leq CQ.
\end{equation*}
The smoothness of the prescribed coefficients gives
\begin{equation*}
 \begin{aligned}
 \|Q_t \widetilde V\|_{\rL^2(\Omega)^2}
 \leq C\|\sqrt Q\,\widetilde V\|_{\rL^2(\Omega)^2} \ \text{ and } \ 
 \|(\partial_t\mathcal L_H)Y\|_{\rL^2(\Omega)^2}
 +\|((\partial_t\mathcal T_H)Y)|_{z=1}
       \|_{\rH^{1/2}(\T^2)^2}
\leq C\|Y\|_{\rH^2(\Omega)^2}.
 \end{aligned}
\end{equation*}
The $\rH^2$ elliptic estimate for $Y$, followed by the
inhomogeneous conormal estimate for $\partial_tY$, therefore gives
\begin{equation*}
 \|Y\|_{\rH^2(\Omega)^2}
 +\|\partial_tY\|_{\rH^2(\Omega)^2}
 \leq C\|\sqrt Q\,\widetilde V\|_{\rL^2(\Omega)^2}.
\end{equation*}
Differentiating the definition of $B_H$ now yields
\begin{equation*}
 (\partial_tB_H)\widetilde V
 =-\frac{H_t}{H}B_H \widetilde V
       +\frac2H\divH\mathcal Q(Q_tY+Q\partial_tY),
\end{equation*}
and hence
\begin{equation}\label{eq:basic-B-time}
 \|(\partial_tB_H)\widetilde V\|_{\rH^1(\T^2)}
       \leq C\|\sqrt Q\,\widetilde V\|_{\rL^2(\Omega)^2}.
\end{equation}

We can now estimate every term in
\eqref{eq:modified-height-equation}. By
\eqref{eq:basic-column-inverse1} we obtain
\begin{equation*}
 \|B_H\nablaH h\|_{\rH^{1/2}(\T^2)}
 \leq C\|Q\nablaH h\|_{\rH^{-1/2}(\Omega)^2}
 \leq C\|h\|_{\rH^{1/2}(\T^2)}.
\end{equation*}
Moreover, \autoref{lem:height-correction-operator} and
smoothness of $b_1,b_2$ yield
\begin{equation*}
 \begin{aligned}
 \|b_1\cdot\nablaH(B_H\widetilde V)\|_{\rH^{1/2}(\T^2)}
 &\leq C\|B_H\widetilde V\|_{\rH^{3/2}(\T^2)}
 \leq C\|\widetilde V\|_{\rH^{1/2}(\Omega)^2},\\
 \|b_2B_H \widetilde V\|_{\rH^{1/2}(\T^2)}
 &\leq C\|B_H\widetilde V\|_{\rH^1(\T^2)}
 \leq C\|\sqrt Q\,\widetilde V\|_{\rL^2(\Omega)^2}.
 \end{aligned}
\end{equation*}
Combining these estimates with
\eqref{eq:basic-B-time} and
$\rH^1(\Omega)\hookrightarrow\rH^{1/2}(\Omega)$ gives
\begin{equation}\label{eq:modified-height-source-bound}
 \begin{aligned}
 \|\widetilde h_t+b_1\cdot\nablaH\widetilde h
                    +b_2\widetilde h\|_{\rH^{1/2}(\T^2)}
 \leq C\big(
       \|h\|_{\rH^{1/2}(\T^2)}
       +\|\widetilde V\|_{\rH^1(\Omega)^2}
       +\mathfrak g(t)\big).
 \end{aligned}
\end{equation}
where the source norm $\mathfrak g(t)$ is given by
\begin{equation*}
 \mathfrak g(t)=\big(
 \|g_1(t)\|_{\rH^{1/2}(\T^2)}^2
 +\|g_2(t)\|_{\rL^2(\Omega)^2}^2
 +\|g_3(t)\|_{\rH^{-1/2}(\T^2)^2}^2
 +\|g_4(t)\|_{\rH^{-1/2}(\T^2)^2}^2
 \big)^{1/2}.
\end{equation*}
As in the toy model, differentiating the fractional norm gives
\begin{equation*}
 \begin{aligned}
 \frac12\frac{\dd}{\dd t}
       \|\widetilde h\|_{\rH^{1/2}(\T^2)}^2
 ={}&\int_{\T^2}(1-\DeltaH)^{1/4}\widetilde h\,
       (1-\DeltaH)^{1/4}
       \bigl(\widetilde h_t+b_1\cdot\nablaH\widetilde h
                           +b_2\widetilde h\bigr)\,\dd x_\rH\\
 &-\int_{\T^2}(1-\DeltaH)^{1/4}\widetilde h\,
       (1-\DeltaH)^{1/4}
       \bigl(b_1\cdot\nablaH\widetilde h
                           +b_2\widetilde h\bigr)\,\dd x_\rH.
 \end{aligned}
\end{equation*}
H\"older's inequality estimates the first integral.
For the second, the exact same commutator calculation from the toy
model yields
\begin{equation*}
 \begin{aligned}
 &\|\bigl[(1-\DeltaH)^{1/4},
                 b_1\cdot\nablaH\bigr]\widetilde h
       \|_{\rL^2(\T^2)}
 +\|(1-\DeltaH)^{1/4}(b_2\widetilde h)\|_{\rL^2(\T^2)}
 \leq C\|\widetilde h\|_{\rH^{1/2}(\T^2)}.
 \end{aligned}
\end{equation*}
The remaining transport contribution satisfies
\begin{equation*}
 \int_{\T^2}(1-\DeltaH)^{1/4}\widetilde h\,
       b_1\cdot\nablaH(1-\DeltaH)^{1/4}\widetilde h
       \,\dd x_\rH
 =-\frac12\int_{\T^2}(\divH b_1)
       |(1-\DeltaH)^{1/4}\widetilde h|^2\,\dd x_\rH.
\end{equation*}
Using \eqref{eq:modified-height-source-bound} and Young's
inequality, we consequently obtain, for every $\varepsilon>0$, that
\begin{equation}\label{eq:modified-height-fractional-energy}
 \begin{aligned}
 \frac12\frac{\dd}{\dd t}
       \|\widetilde h\|_{\rH^{1/2}(\T^2)}^2
 \leq\varepsilon\|\widetilde V\|_{\rH^1(\Omega)^2}^2
 &+C_\varepsilon\big(
       \|\widetilde h\|_{\rH^{1/2}(\T^2)}^2
       +\|h\|_{\rH^{1/2}(\T^2)}^2
       +\mathfrak g(t)^2\big).
 \end{aligned}
\end{equation}

We next test momentum in \eqref{eq:off-exact-momentum} with $\widetilde V$ and the height
equation with $(H/2)h$. The latter factor cancels the
pressure and column contributions because
\begin{equation*}
 \int_{\T^2}h\,\divH\mathcal Q(Q\widetilde V)\,\dd x_\rH
       =-\int_\Omega Q\nablaH h\cdot\widetilde V\,\dd x.
\end{equation*}
To describe the viscous contribution, we define
$\mathcal D_H(U)$ by
\begin{equation*}
 \mathcal D_H(U)=\int_\Omega
       \left[J\operatorname{tr}\bigl(
                 (\mathcal S_HU)^\top\nablaH U\bigr)
             +\mathcal T_HU\cdot U_z\right]\,\dd x.
\end{equation*}
The coercivity of the shifted operator $\mathcal L_H$ gives
\begin{equation}\label{eq:basic-viscous-coercivity}
 \mathcal D_H(U)+\|\sqrt Q\,U\|_{\rL^2(\Omega)^2}^2
       \geq c\|U\|_{\rH^1(\Omega)^2}^2.
\end{equation}
The shift $QU$ in $\mathcal L_HU$ cancels the identical term
in $\mathcal G(h,U)$. Moreover, the definition of $L$ gives
\begin{equation*}
 \mathcal B|_{z=0}=0
 \ \text{ and }\
 \mathcal B|_{z=1}=-\frac H2L-\divH\mathcal Q(QV)=0.
\end{equation*}
Integration by parts therefore gives
\begin{equation*}
 \begin{aligned}
 &\quad\frac12\frac{\dd}{\dd t}\left(
       \int_\Omega Q|\widetilde V|^2\,\dd x
       +\frac12\int_{\T^2}Hh^2\,\dd x_\rH\right)
       +\mathcal D_H(\widetilde V)\\
 &=\frac12\int_\Omega
       \bigl(Q_t+\divH(QV)+\partial_z\mathcal B\bigr)
                         |\widetilde V|^2\,\dd x
       +\frac12\int_{\T^2}Hg_1h\,\dd x_\rH+\frac14\int_{\T^2}
       \bigl(H_t-\divH\mathcal Q(JV)-2L\bigr)h^2\,\dd x_\rH\\
 &\quad+\int_\Omega\left[
       \mathcal G(h,\widetilde V)-Q\widetilde V
       +QV\cdot\nablaH\widetilde V+\mathcal B\widetilde V_z
       \right]\cdot\widetilde V\,\dd x+\int_{\T^2}G_1(h)\cdot\widetilde V|_{z=1}\,\dd x_\rH
      \\& \quad -\int_{\T^2}\frac{\mu}{J|_{z=0}}
               G_0(h)\cdot\widetilde V|_{z=0}\,\dd x_\rH.
 \end{aligned}
\end{equation*}
The mass defect satisfies
\begin{equation*}
 Q_t+\divH(QV)+\partial_z\mathcal B
       =\delta_HQ[H_t-L],
 \quad
 \left|Q_t+\divH(QV)+\partial_z\mathcal B\right|\leq CQ.
\end{equation*}
Consequently, its integral is bounded by
$C\|\sqrt Q\,\widetilde V\|_{\rL^2(\Omega)^2}^2$.
The two surface integrals involving $h$ and $g_1$ satisfy
\begin{equation*}
 \begin{aligned}
 |\frac12\int_{\T^2}Hg_1h\,\dd x_\rH|
 +|\frac14\int_{\T^2}
       \bigl(H_t-\divH\mathcal Q(JV)-2L\bigr)h^2\,\dd x_\rH|
\leq C\big(
       \|h\|_{\rL^2(\T^2)}^2
       +\|g_1\|_{\rL^2(\T^2)}^2\big).
 \end{aligned}
\end{equation*}

For the momentum integral, we use the previously established
$\rH^{-1/2}(\Omega)^2$ estimate for $\mathcal G$.
Smoothness of the prescribed coefficients gives
\begin{equation*}
 \|Q\widetilde V\|_{\rL^2(\Omega)^2}
 +\|QV\cdot\nablaH\widetilde V\|_{\rL^2(\Omega)^2}
 +\|\mathcal B\partial_z\widetilde V\|_{\rL^2(\Omega)^2}
 \leq C\|\widetilde V\|_{\rH^1(\Omega)^2}.
\end{equation*}
Since $\rL^2(\Omega)^2\hookrightarrow\rH^{-1/2}(\Omega)^2$,
we obtain
\begin{equation*}
 \begin{aligned}
 |\int_\Omega\left[
       \mathcal G(h,\widetilde V)-Q\widetilde V
       +QV\cdot\nablaH\widetilde V
       +\mathcal B\partial_z\widetilde V
       \right]\cdot\widetilde V\,\dd x|
\quad\leq C\big(
       \|h\|_{\rH^{1/2}(\T^2)}
       +\|\widetilde V\|_{\rH^1(\Omega)^2}
       +\mathfrak g(t)\big)
       \|\widetilde V\|_{\rH^{1/2}(\Omega)^2}.
 \end{aligned}
\end{equation*}
To absorb the velocity contribution, we use the embeddings
\begin{equation*}
 \rH^1(\Omega)^2\Subset\rH^{1/2}(\Omega)^2
 \hookrightarrow\rL^2(\Omega,Q\,\dd x)^2.
\end{equation*}
The second embedding is continuous and injective because
$Q$ is bounded and positive in the interior. Therefore, for every $\eta>0$, it holds that
\begin{equation*}
 \|\widetilde V\|_{\rH^{1/2}(\Omega)^2}
 \leq\eta\|\widetilde V\|_{\rH^1(\Omega)^2}
       +C_\eta\|\sqrt Q\,\widetilde V\|_{\rL^2(\Omega)^2}.
\end{equation*}
The constants are uniform under the prescribed comparison
bounds between $Q$ and $1-z$. Choosing $\eta$ sufficiently
small and then applying Young's inequality, we obtain,
for every $\varepsilon>0$,
\begin{equation*}
 \begin{aligned}
 &\quad|\int_\Omega\left[
       \mathcal G(h,\widetilde V)-Q\widetilde V
       +QV\cdot\nablaH\widetilde V
       +\mathcal B\partial_z\widetilde V
       \right]\cdot\widetilde V\,\dd x|\\
 &\leq\varepsilon\|\widetilde V\|_{\rH^1(\Omega)^2}^2
       +C_\varepsilon\big(
       \|\sqrt Q\,\widetilde V\|_{\rL^2(\Omega)^2}^2
       +\|h\|_{\rH^{1/2}(\T^2)}^2
       +\mathfrak g(t)^2\big).
 \end{aligned}
\end{equation*}
Next, 
the trace theorem and the bounds for $G_1(h)$ and $G_0(h)$
likewise give
\begin{equation*}
 \begin{aligned}
 |\int_{\T^2}G_1(h)\cdot \widetilde V|_{z=1}\,\dd x_\rH|
 +|\int_{\T^2}\frac{\mu}{J|_{z=0}}
                     G_0(h)\cdot \widetilde V|_{z=0}\,\dd x_\rH|
 \leq C\big(
       \|h\|_{\rH^{1/2}(\T^2)}
       +\mathfrak g(t)\big)\|\widetilde V\|_{\rH^1(\Omega)^2}.
 \end{aligned}
\end{equation*}
Combining these estimates with Young's inequality and
\eqref{eq:basic-viscous-coercivity}, we obtain
\begin{equation}\label{eq:basic-height-velocity-energy}
 \begin{aligned}
 \frac12\frac{\dd}{\dd t}\left(
       \int_\Omega Q|\widetilde V|^2\,\dd x
       +\frac12\int_{\T^2}Hh^2\,\dd x_\rH\right)
       +c\|\widetilde V\|_{\rH^1(\Omega)^2}^2
\leq C\big(
       \|\sqrt Q\,\widetilde V\|_{\rL^2(\Omega)^2}^2
       +\|h\|_{\rH^{1/2}(\T^2)}^2
       +\mathfrak g(t)^2\big).
 \end{aligned}
\end{equation}

We now add the fractional height estimate and define the
combined energy $E$ by
\begin{equation*}
 E(t)=\frac12\int_\Omega Q|\widetilde V|^2\,\dd x
       +\frac14\int_{\T^2}Hh^2\,\dd x_\rH
       +\frac12\|\widetilde h\|_{\rH^{1/2}(\T^2)}^2.
\end{equation*}
Since $h=\widetilde h+B_H\widetilde V$,
\autoref{lem:height-correction-operator} gives
\begin{equation*}
 \begin{aligned}
 \frac{1}{2}\|h(t)\|_{\rH^{1/2}(\T^2)}^2
 \leq \|\widetilde h(t)\|_{\rH^{1/2}(\T^2)}^2
       +\|B_H\widetilde V(t)\|_{\rH^{1/2}(\T^2)}^2\\
 \leq C\big(
       \|\widetilde h(t)\|_{\rH^{1/2}(\T^2)}^2
       +\|\sqrt{Q(t)}\,\widetilde V(t)\|_{\rL^2(\Omega)^2}^2
       \big)
 \leq CE(t).
 \end{aligned}
\end{equation*}
Choosing $\varepsilon$ sufficiently small in
\eqref{eq:modified-height-fractional-energy} and adding
\eqref{eq:basic-height-velocity-energy}, we therefore obtain
\begin{equation*}
 E'(t)+c\|\widetilde V(t)\|_{\rH^1(\Omega)^2}^2
       \leq CE(t)+C\mathfrak g(t)^2.
\end{equation*}
Since the corrections have zero initial values, $E(0)=0$.
The preceding calculations and Gronwall's inequality give
the following basic energy estimate.

\begin{lem}[Basic energy estimate]\label{lem:basic-energy}
Let $(H,V)$ be a prescribed smooth background with $H(0)=H_0$, choose $T>0$ sufficiently small that $\phi(\cdot,t)$ is well defined and let $(h,\widetilde V)$
be a smooth solution of \eqref{eq:off-exact-momentum}
subject to \eqref{eq:good-boundary}, with zero initial values.
Then
\begin{equation*}
 \begin{aligned}
 \sup_{0\leq t\leq T}\left(
       \|h(t)\|_{\rH^{1/2}(\T^2)}^2
       +\|\sqrt{Q(t)}\,\widetilde V(t)
                              \|_{\rL^2(\Omega)^2}^2\right)
       +\int_0^T\|\widetilde V(t)\|_{\rH^1(\Omega)^2}^2\,\dd t\leq C_T\int_0^T\mathfrak g(t)^2\,\dd t.
 \end{aligned}
\end{equation*}
The constant $C_T$ depends on $T$, finitely many prescribed
background norms, the viscosity coefficients and the
positive chart bounds.
\end{lem}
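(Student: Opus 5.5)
The proof assembles the two differential inequalities derived above and closes them by Gronwall's inequality. First, I fix the background $(H,V)$ and choose $T$ small, so that $J\geq H_\ast/2$ and $Q/(1-z)\geq H_\ast^2/8$ on $[0,T]$. Then every constant in \eqref{eq:modified-height-fractional-energy}, \eqref{eq:basic-height-velocity-energy}, \eqref{eq:basic-viscous-coercivity} and \autoref{lem:height-correction-operator} depends only on finitely many background norms, the viscosities and these chart bounds. Next, I choose $\varepsilon$ in \eqref{eq:modified-height-fractional-energy} equal to half the coercivity constant $c$ of \eqref{eq:basic-height-velocity-energy}. With $E(t)$ defined as above, adding the two inequalities absorbs the term $\varepsilon\|\widetilde V\|_{\rH^1}^2$ into the left-hand side. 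The right-hand side then contains $\|\widetilde h\|_{\rH^{1/2}}^2$, $\|\sqrt Q\,\widetilde V\|_2^2$ and $\|h\|_{\rH^{1/2}}^2$. The first two are bounded by $CE$ directly. For the third I use the decomposition $h=\widetilde h+B_H\widetilde V$ together with \autoref{lem:height-correction-operator}, which gives $\|h\|_{\rH^{1/2}}^2\leq CE$. This yields
\begin{equation*}
 E'(t)+\tfrac c2\|\widetilde V(t)\|_{\rH^1(\Omega)^2}^2\leq CE(t)+C\mathfrak g(t)^2 .
\end{equation*}

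Since $h(0)=0$ and $\widetilde V(0)=0$, we have $\widetilde h(0)=0$, so $E(0)=0$. Gronwall's inequality then gives $\sup_{[0,T]}E\leq Ce^{CT}\int_0^T\mathfrak g^2\,\dd t$. Integrating the differential inequality over $[0,T]$ controls $\int_0^T\|\widetilde V\|_{\rH^1}^2\,\dd t$ by $CT\sup E+C\int_0^T\mathfrak g^2$. The pointwise bound $\|h\|_{\rH^{1/2}}^2\leq CE$ then converts the supremum of $E$ into the supremum of $\|h\|_{\rH^{1/2}}^2+\|\sqrt Q\,\widetilde V\|_2^2$ stated in the lemma.

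The step needing most care is justifying the differentiated identities for a smooth solution, together with uniformity of constants in $t$. The fractional energy identity requires $\widetilde h$ smooth, which holds because $B_H$ maps smooth fields to smooth functions. The derivative $\partial_tB_H$ is controlled by \eqref{eq:basic-B-time}. The only genuinely delicate point is the interpolation bound $\|\widetilde V\|_{\rH^{1/2}}\leq\eta\|\widetilde V\|_{\rH^1}+C_\eta\|\sqrt Q\,\widetilde V\|_2$, whose constant must be uniform in $t$. I would obtain this uniformity by comparing $Q$ with $1-z$ via the chart bounds, which reduces it to one fixed compactness argument for the weight $1-z$.
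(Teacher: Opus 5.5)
Your proposal is correct and follows the paper's own argument. You add the fractional estimate \eqref{eq:modified-height-fractional-energy} for $\widetilde h$ to the weighted height--velocity estimate \eqref{eq:basic-height-velocity-energy}, absorb the $\varepsilon\|\widetilde V\|_{\rH^1}^2$ term, bound $\|h\|_{\rH^{1/2}}^2\leq CE$ via $h=\widetilde h+B_H\widetilde V$ and \autoref{lem:height-correction-operator}, and close with $E(0)=0$ and Gronwall; your integration of the differential inequality to recover $\int_0^T\|\widetilde V\|_{\rH^1}^2\,\dd t$ just makes explicit what the paper leaves implicit.
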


\subsection{Higher order tame estimates}

To obtain higher order estimates, we differentiate in time and the horizontal variables. We use smooth
sources and corrections whose initial time derivatives vanish at every
order. For $m\in\N_0$ and
$i=(i_0,i_1,i_2)\in\N_0^3$ with $|i|\leq m$, we define
\begin{equation*}
 \partial^i=\partial_t^{i_0}\partial_{x_1}^{i_1}\partial_{x_2}^{i_2},
 \quad h^i=\partial^ih,\quad
 \widetilde V^{\,i}=\partial^i\widetilde V
 \ \text{ and }\ g_j^i=\partial^ig_j\quad(j=1,2,3,4).
\end{equation*}
The superscripts denote time and horizontal derivatives. All coefficients
are evaluated at the same prescribed smooth background $(H,V)$.
Collecting the coefficients in \eqref{eq:off-exact-momentum}
as in the basic energy calculation and differentiating gives
\begin{equation}\label{eq:higher-system}
 \left\{\begin{aligned}
 \partial_th^i+b_1\cdot\nablaH h^i+b_2h^i
       +\frac2H\divH\mathcal Q(Q\widetilde V^{\,i})
 &=g_1^i+r_1^i &&\text{in }(0,T)\times\T^2,\\
 Q(\partial_t\widetilde V^{\,i}+\nablaH h^i)
       -\divH(J\mathcal S_H\widetilde V^{\,i})
       -\partial_z\mathcal T_H\widetilde V^{\,i}
 &=g_2^i+r_2^i &&\text{in }(0,T)\times\Omega,\\
 h^i(0)=0\ \text{ and }\ \widetilde V^{\,i}(0)&=0.
 \end{aligned}\right.
\end{equation}
Differentiating \eqref{eq:good-boundary} gives
\begin{equation}\label{eq:higher-boundary}
 \mathcal T_H\widetilde V^{\,i}|_{z=1}=g_3^i+r_3^i
 \ \text{ and }\
 \partial_z\widetilde V^{\,i}|_{z=0}=g_4^i+r_4^i.
\end{equation}
The quantities $r_j^i$ contain the coefficient commutators and the
remaining linear terms on the right-hand side of the good-unknown
equations. A direct calculation yields
\begin{equation*}
 \begin{aligned}
 r_1^i={}&-\sum_{0<j\leq i}\binom ij
       \left((\partial^jb_1)\cdot\nablaH h^{i-j}
                    +(\partial^jb_2)h^{i-j}\right)\\
 &-\sum_{\substack{j,k\geq0,\ j+k\leq i\\|j|+|k|>0}}
       \binom ij\binom{i-j}{k}
       \partial^j\left(\frac2H\right)
       \divH\mathcal Q\left(
          (\partial^kQ)\widetilde V^{\,i-j-k}\right).
 \end{aligned}
\end{equation*}
To write the differentiated momentum terms compactly, we define
the vector field $\mathcal M^i$ by
\begin{equation*}
 \mathcal M^i
 =\sum_{j\leq i}\binom ij
       \left((\partial^jQ)\widetilde V^{\,i-j}
                +\frac12\partial^j(JV)h^{i-j}\right)
 =\partial^i\left(Q\widetilde V+\frac{Jh}{2}V\right).
\end{equation*}
For the differentiated viscosity, we define the matrix
$\Pi_\rH^i$ and the vector $\Pi_z^i$ by
\begin{equation*}
 \Pi_\rH^i=\partial^i(J\mathcal S_H\widetilde V)
                   -J\mathcal S_H\widetilde V^{\,i}
 \ \text{ and }\
 \Pi_z^i=\partial^i(\mathcal T_H\widetilde V)
                   -\mathcal T_H\widetilde V^{\,i}.
\end{equation*}
Using \eqref{eq:good-stress-flux}, their full expressions are
\begin{equation*}
 \begin{aligned}
 \Pi_\rH^i
& =\sum_{0<j\leq i}\binom ij\biggl[
 \mu(\partial^jJ)
       \left(\nablaH\widetilde V^{\,i-j}
                    +(\nablaH\widetilde V^{\,i-j})^\top\right)
 +(\mu'-\mu)(\partial^jJ)
                    (\divH\widetilde V^{\,i-j})I_2\\
 &\quad -\mu\left(
       \partial_z\widetilde V^{\,i-j}\otimes\nablaH\partial^j\zeta
       +\nablaH\partial^j\zeta\otimes\partial_z\widetilde V^{\,i-j}
       \right)-(\mu'-\mu)
       \left(\nablaH\partial^j\zeta
                    \cdot\partial_z\widetilde V^{\,i-j}\right)I_2
 \biggr]
 \end{aligned}
\end{equation*}
and
\begin{equation*}
 \begin{aligned}
 \Pi_z^i
 &=\sum_{0<j\leq i}\binom ij\biggl[
 \mu\partial^j\left(\frac{1+|\nablaH\zeta|^2}{J}\right)
                     \partial_z\widetilde V^{\,i-j}
 +\mu'\partial^j\left(
       \frac{\nablaH\zeta\otimes\nablaH\zeta}{J}\right)
                     \partial_z\widetilde V^{\,i-j}\\
 &\quad-\mu\left(\nablaH\widetilde V^{\,i-j}
                    +(\nablaH\widetilde V^{\,i-j})^\top\right)
                     \nablaH\partial^j\zeta
 -(\mu'-\mu)(\divH\widetilde V^{\,i-j})
                     \nablaH\partial^j\zeta
 \biggr].
 \end{aligned}
\end{equation*}
Consequently, we obtain
\begin{equation*}
 \begin{aligned}
 r_2^i={}&\divH\Pi_\rH^i+\partial_z\Pi_z^i
       -\sum_{0<j\leq i}\binom ij(\partial^jQ)
           \left(\partial_t\widetilde V^{\,i-j}+\nablaH h^{i-j}\right)\\
 &-\sum_{j\leq i}\binom ij\biggl[
       \partial^j\left(\frac QJ V_z\right)
                                 Eg_1^{i-j}
       +(\partial^j(QV))\cdot\nablaH\widetilde V^{\,i-j}
       +\partial^j(Q\nablaH V)\widetilde V^{\,i-j}
       +(\partial^j\mathcal B)\partial_z\widetilde V^{\,i-j}\\
 &
       +\frac12\partial^j\left[
           J(V_t+V\cdot\nablaH V+\nablaH H)
                +\left(\frac{\zeta L}{H}-EL\right)V_z
                                  \right]h^{i-j}\\
 &
       +\partial^j\left(\frac\zeta H V_z\right)
                                  \divH\mathcal Q\mathcal M^{i-j}
       -(\partial^jV_z)\divH
             \int_0^z\mathcal M^{i-j}(x_\rH,s,t)\,\dd s
       \biggr]\\
 &+\sum_{j\leq i}\binom ij
       \partial^j\left[
         \frac Q{J^2}\partial_z\bigl(E\mathcal F_H(H,V)\bigr)V_z
                         \right]Eh^{i-j}-\partial_z\sum_{j\leq i}\binom ij
       \partial^j\left(\frac{\mathcal F_V(H,V)}J\right)Eh^{i-j}.
 \end{aligned}
\end{equation*}
At the top boundary, $Eh|_{z=1}=h$ gives
\begin{equation*}
 G_1(h)=g_3+\left[
       (\mathcal S_HV)\nablaH h
       -\frac{(\mathcal S_HV)\nablaH J
                         +\partial_z\mathcal T_HV}{J}h
       \right]|_{z=1}.
\end{equation*}
Differentiating this identity and the bottom condition yields
\begin{equation*}
 \begin{aligned}
 r_3^i=-\Pi_z^i|_{z=1}+\bigg[\sum_{j\leq i}\binom ij
       \left(
       \partial^j(\mathcal S_HV)\nablaH h^{i-j}
       -\partial^j\left(
          \frac{(\mathcal S_HV)\nablaH J
                          +\partial_z\mathcal T_HV}{J}\right)h^{i-j}
       \right)\bigg]|_{z=1}
 \end{aligned}
\end{equation*}
and
\begin{equation*}
 r_4^i=-\bigg[\sum_{j\leq i}\binom ij
          \partial^j\left(\frac{V_z}{J}\right)
                              \partial_zEh^{i-j}\bigg]|_{z=0}.
\end{equation*}
The term $-\Pi_z^i|_{z=1}$ comes from the same differentiated
viscous flux as $\partial_z\Pi_z^i$ in momentum.
For $i=0$, we have $g_j^0=g_j$, and these definitions recover the
original equations because
\begin{equation*}
 r_1^0=0,\quad
 r_2^0=\mathcal G(h,\widetilde V)-Q\widetilde V-g_2,\quad
 r_3^0=G_1(h)-g_3
 \ \text{ and }\ r_4^0=G_0(h)-g_4.
\end{equation*}

As in the basic energy estimate, we seek
$\rH^{1/2}(\T^2)$ control of each $h^i$.
We therefore define the modified differentiated height by
\begin{equation*}
 \widetilde h^{\,i}=h^i-B_H\widetilde V^{\,i}.
\end{equation*}
Since $B_H$ depends on time and the horizontal variables through the
background, its relation to the derivative of $\widetilde h$ is
\begin{equation*}
 \widetilde h^{\,i}
 =\partial^i\widetilde h
       +\partial^i(B_H\widetilde V)
       -B_H\partial^i\widetilde V.
\end{equation*}
We apply the elliptic decomposition directly to
\eqref{eq:higher-system} with \eqref{eq:higher-boundary}.
Because $\mathcal L_H$ contains the shift $Q$, this gives
\begin{equation*}
 \widetilde V^{\,i}
 =\mathcal L_H^{-1}\big(
       g_2^i+r_2^i+Q\widetilde V^{\,i}
       -Q\nablaH h^i-Q\partial_t\widetilde V^{\,i}\big)
       +\mathcal N_H(g_3^i+r_3^i,g_4^i+r_4^i).
\end{equation*}
Substituting this identity into the height equation and using
\begin{equation*}
 \partial_t\widetilde h^{\,i}
 =\partial_th^i-B_H\partial_t\widetilde V^{\,i}
                    -(\partial_tB_H)\widetilde V^{\,i}
\end{equation*}
therefore yields
\begin{equation}\label{eq:higher-modified-height}
 \begin{aligned}
 \partial_t\widetilde h^{\,i}
       +b_1\cdot\nablaH\widetilde h^{\,i}+b_2\widetilde h^{\,i}
 ={}&g_1^i+r_1^i+B_H\nablaH h^i
       -(\partial_tB_H)\widetilde V^{\,i}
       -b_1\cdot\nablaH(B_H\widetilde V^{\,i})
       -b_2B_H\widetilde V^{\,i}\\
 &-\frac2H\divH\mathcal Q\left(
       Q\mathcal L_H^{-1}(g_2^i+r_2^i+Q\widetilde V^{\,i}) +
       Q\mathcal N_H(g_3^i+r_3^i,g_4^i+r_4^i)\right),
 \end{aligned}
\end{equation}
with $h^{\,i}(0)=0$.
Here, $\partial_tB_H$ denotes
the derivative of the operator with its input held fixed as in the energy tame derivation.
The differentiated viscous terms must be estimated together with
their boundary terms.  To explain the required estimate, suppose
that $Y$ satisfies
\begin{equation*}
 \mathcal L_HY=F+\divH\Pi_\rH+\partial_z\Pi_z
 \quad\text{in }\Omega,
\end{equation*}
with
\begin{equation*}
 (\mathcal T_HY+\Pi_z)|_{z=1}=\eta_1
 \ \text{ and }\
 (\mathcal T_HY+\Pi_z)|_{z=0}=\eta_0,
\end{equation*}
in the weak sense.
Coercivity of $\mathcal{L}_H$ gives
\begin{equation*}
 \|Y\|_{\rH^1(\Omega)^2}\leq C\bigl(
 \|F\|_{\rH^{-1/2}(\Omega)^2}
 +\|\Pi_\rH\|_{\rL^2(\Omega)^{2\times2}}
 +\|\Pi_z\|_{\rL^2(\Omega)^2}
 +\|\eta_1\|_{\rH^{-1/2}(\T^2)^2}
 +\|\eta_0\|_{\rH^{-1/2}(\T^2)^2}\bigr).
\end{equation*}

To derive an estimate for $\mathcal Q(QY)$, we test the weak
equation with $R\varphi(x_\rH)$, where
$\varphi\in\rC^\infty(\T^2)^2$. Since $JR=Q$, this produces
the weighted horizontal stress $\mathcal Q(Q\mathcal S_HY)$.
Moreover, we have
\begin{equation*}
 \partial_{x_a}Q-\partial_z(R\partial_{x_a}\zeta)
       =\frac J2\partial_{x_a}H,\quad a=1,2,
 \quad R|_{z=1}=0
 \ \text{ and }\ R|_{z=0}=\frac H2.
\end{equation*}
To express the derivatives of the weighted average, we define
$\mathcal E_Y$ by
\begin{equation*}
 \mathcal E_Y=\frac12\mathcal Q(JY)\otimes\nablaH H.
\end{equation*}
Integration by parts then gives
\begin{equation*}
 \mathcal Q\left(
 Q\left[\nablaH Y-\frac{Y_z\otimes\nablaH\zeta}{J}\right]\right)
 =\nablaH\mathcal Q(QY)-\mathcal E_Y.
\end{equation*}
Consequently, testing with $R\varphi$ yields the following
equation on $\T^2$ in the sense of distributions,
\begin{equation*}
 \begin{aligned}
 -\mu\DeltaH\mathcal Q(QY)-\mu'\nablaH\divH\mathcal Q(QY)
 ={}&\mathcal Q(RF)+\divH\mathcal Q(R\Pi_\rH)
       -\mathcal Q(\Pi_\rH\nablaH R+\Pi_z\partial_zR)
       -\frac H2\eta_0\\
 &-\mathcal Q\left(
       \frac J2(\mathcal S_HY)\nablaH H
                           -\frac\mu2Y_z+QRY\right)\\
 &-\divH\left[
       \mu(\mathcal E_Y+\mathcal E_Y^\top)
       +(\mu'-\mu)\operatorname{tr}(\mathcal E_Y)I_2\right].
 \end{aligned}
\end{equation*}
The top boundary contribution vanishes because $R|_{z=1}=0$.
Here we estimate
\begin{equation*}
 \begin{aligned}
 |\int_{\T^2}\mathcal Q(RF)\cdot\varphi\,\dd x_\rH|
 =|\int_\Omega F\cdot R\varphi\,\dd x|
 \leq \|F\|_{\rH^{-1/2}(\Omega)^2}
              \|R\varphi\|_{\rH^{1/2}(\Omega)^2}
 \leq C\|F\|_{\rH^{-1/2}(\Omega)^2}
              \|\varphi\|_{\rH^{1/2}(\T^2)^2}
 \end{aligned}
\end{equation*}
so that 
$ \|\mathcal Q(RF)\|_{\rH^{-1/2}(\T^2)^2}
       \leq C\|F\|_{\rH^{-1/2}(\Omega)^2}.$
The terms involving the viscous commutators satisfy
\begin{equation*}
 \begin{aligned}
 \|\divH\mathcal Q(R\Pi_\rH)\|_{\rH^{-1/2}(\T^2)^2}
 &\leq C\|\Pi_\rH\|_{\rL^2((0,1),\rH^{1/2}(\T^2)^{2\times2})},\\
 \|\mathcal Q(\Pi_\rH\nablaH R+\Pi_z\partial_zR)
                         \|_{\rL^2(\T^2)^2}
 &\leq C\bigl(\|\Pi_\rH\|_{\rL^2(\Omega)^{2\times2}}
                         +\|\Pi_z\|_{\rL^2(\Omega)^2}\bigr).
 \end{aligned}
\end{equation*}
For the remaining terms, smoothness of the prescribed coefficients
gives
\begin{equation*}
 \begin{aligned}
 \|\mathcal Q\big(
       \frac J2(\mathcal S_HY)\nablaH H
              -\frac\mu2Y_z+QRY\big)\|_{\rL^2(\T^2)^2}
 &\leq C\|Y\|_{\rH^1(\Omega)^2},\\
 \left\|\divH\left[
       \mu(\mathcal E_Y+\mathcal E_Y^\top)
       +(\mu'-\mu)\operatorname{tr}(\mathcal E_Y)I_2
       \right]\right\|_{\rL^2(\T^2)^2}
 &\leq C\|\mathcal E_Y\|_{\rH^1(\T^2)^{2\times2}}
 \leq C\|Y\|_{\rH^1(\Omega)^2}\ \text{ and }\\
 \|\frac H2\eta_0\|_{\rH^{-1/2}(\T^2)^2}
 &\leq C\|\eta_0\|_{\rH^{-1/2}(\T^2)^2}.
 \end{aligned}
\end{equation*}
The operator $-\mu\DeltaH-\mu'\nablaH\divH$ is the periodic two dimensional
Lam\'e operator on $\T^2$ which satisfies the estimate
\begin{equation*}
 \begin{aligned}
 \|\mathcal Q(QY)\|_{\rH^{3/2}(\T^2)^2}
 \leq C\bigl(
 \|-\mu\DeltaH\mathcal Q(QY)
       -\mu'\nablaH\divH\mathcal Q(QY)\|_{\rH^{-1/2}(\T^2)^2}+\|\mathcal Q(QY)\|_{\rL^2(\T^2)^2}\bigr).
 \end{aligned}
\end{equation*}
Since
\begin{equation*}
 \|\mathcal Q(QY)\|_{\rL^2(\T^2)^2}
       \leq C\|Y\|_{\rH^1(\Omega)^2}
 \ \text{ and }\
 \|\frac2H\divH\mathcal Q(QY)\|_{\rH^{1/2}(\T^2)}
       \leq C\|\mathcal Q(QY)\|_{\rH^{3/2}(\T^2)^2},
\end{equation*}
combining the preceding estimates with the weak estimate for $Y$
and $\rL^2(\T^2)\hookrightarrow\rH^{-1/2}(\T^2)$ gives
\begin{equation}\label{eq:structured-averaged-inverse}
 \begin{aligned}
&\quad \|Y\|_{\rH^1(\Omega)^2}
 +\|\frac2H\divH\mathcal Q(QY)\|_{\rH^{1/2}(\T^2)}
\\& \leq C\bigl(\|F\|_{\rH^{-1/2}(\Omega)^2}
 +\|\Pi_\rH\|_{\rL^2((0,1),\rH^{1/2}(\T^2)^{2\times2})}
 +\|\Pi_z\|_{\rL^2(\Omega)^2}+\|\eta_1\|_{\rH^{-1/2}(\T^2)^2}
 +\|\eta_0\|_{\rH^{-1/2}(\T^2)^2}\bigr).
 \end{aligned}
\end{equation}
We now apply this estimate to the
elliptic terms in \eqref{eq:higher-modified-height} by  setting $Y_i$ and $F_i$ to be 
\begin{equation*}
 \begin{aligned}
 Y_i=\mathcal L_H^{-1}
       \bigl(g_2^i+r_2^i+Q\widetilde V^{\,i}\bigr)
       +\mathcal N_H(g_3^i+r_3^i,g_4^i+r_4^i)
 \ \text{ and }\
 F_i=g_2^i+r_2^i+Q\widetilde V^{\,i}
       -\divH\Pi_\rH^i-\partial_z\Pi_z^i.
 \end{aligned}
\end{equation*}
Thus $F_i$ contains the remaining momentum terms after removing
the two viscous divergences, while $Q\widetilde V^{\,i}$
comes from the shift in $\mathcal L_H$.
The corresponding combined boundary sources are given by
\begin{equation*}
 \begin{aligned}
 \eta_{1,i}
 =g_3^i+r_3^i+\Pi_z^i|_{z=1}
   =\partial^iG_1(h)
 \ \text{ and }\
 \eta_{0,i}
 =\frac{\mu}{J|_{z=0}}(g_4^i+r_4^i)+\Pi_z^i|_{z=0}
   =\partial^i\left[\frac{\mu}{J|_{z=0}}G_0(h)\right].
 \end{aligned}
\end{equation*}
We now estimate all sources in their corresponding norms.
We define the differentiated energy and the higher-order source norm by
\begin{equation*}
 \begin{aligned}
 E_i&=\frac12\int_\Omega Q|\widetilde V^{\,i}|^2\,\dd x
 +\frac14\int_{\T^2}H|h^i|^2\,\dd x_\rH
 +\frac12\|\widetilde h^{\,i}\|_{\rH^{1/2}(\T^2)}^2
 \ \text{ and }\\
 \mathfrak g_m(t)^2&=\sum_{|i|\leq m}\bigg(
 \|g_1^i\|_{\rH^{1/2}(\T^2)}^2
 +\|g_2^i\|_{\rL^2(\Omega)^2}^2
 +\sum_{a=3}^4\|g_a^i\|_{\rH^{-1/2}(\T^2)^2}^2\bigg).
 \end{aligned}
\end{equation*}
Thus $E_0$ is the basic energy and $\mathfrak g_0=\mathfrak g$.
The identity $h^i=\widetilde h^{\,i}+B_H\widetilde V^{\,i}$ gives
\begin{equation*}
 \|h^i\|_{\rH^{1/2}(\T^2)}^2
 \leq2\|\widetilde h^{\,i}\|_{\rH^{1/2}(\T^2)}^2
 +C\|\sqrt Q\,\widetilde V^{\,i}\|_{\rL^2(\Omega)^2}^2
 \leq CE_i.
\end{equation*}
To retain the background dependence, fix sufficiently large integers
$m_0\geq2$ and $d$ and write
\begin{equation*}
 C_m=C_m(H,V)=1+\|H\|_{\rH^{m+d}((0,T)\times\T^2)}
 +\|V\|_{\rH^{m+d}((0,T)\times\Omega)^2}.
\end{equation*}
Below, $c_m$ depends on $m$, the fixed time interval, the viscosities,
the positive chart bounds and $C_{m_0}$, but not on higher background
norms. The fixed margin $d$ includes the derivatives in the
coefficients and the Sobolev embeddings and traces used in their
estimates.
Since the background is smooth, multiplication by its coefficients
is bounded on the Sobolev spaces under consideration. Using the
harmonic extension and normal-trace estimates, we obtain
\begin{equation*}
 \|Eh^j\|_{\rH^1(\Omega)}
 +\|\nablaH h^j\|_{\rH^{-1/2}(\T^2)^2}
 +\sum_{a=0}^1
       \|\partial_zEh^j|_{z=a}\|_{\rH^{-1/2}(\T^2)}
 \leq C\sqrt{E_j}.
\end{equation*}
These bounds control the height terms in $F_i$ and $\eta_{a,i}$.
The other velocity
terms in $F_i$ are controlled by
$\|\widetilde V^{\,j}\|_{\rH^1}$.
For the mass commutator, the identity
$\partial^jQ=(1-z)\partial^j(Q/(1-z))$ gives
\begin{equation*}
 \|(\partial^jQ)\partial_t\widetilde V^{\,i-j}
                         \|_{\rL^2(\Omega)^2}
 \leq c_m C_{|j|}
       \|\sqrt Q\,\partial_t\widetilde V^{\,i-j}
                         \|_{\rL^2(\Omega)^2},
 \quad 0<j\leq i.
\end{equation*}
The unknown derivative has tangent order at most $|i|$.
In every remaining product, $k$ derivatives on a background
coefficient leave at most $m-k+1$ tangent derivatives on the unknown.
The explicit formulas for the sources therefore yield
\begin{equation*}
 \begin{aligned}
 &\quad\sum_{|i|\leq m}\big(
       \|F_i\|_{\rH^{-1/2}(\Omega)^2}^2
       +\|r_1^i\|_{\rH^{1/2}(\T^2)}^2\big)\\
 &\leq c_m\bigg[
       \sum_{|i|\leq m}
          (E_i+\|\widetilde V^{\,i}\|_{\rH^1(\Omega)^2}^2)
       +\mathfrak g_m^2
       +\sum_{k=2}^m C_k^2\sum_{|i|\leq m-k+1}
          (E_i+\|\widetilde V^{\,i}\|_{\rH^1(\Omega)^2}^2)
       +\sum_{k=1}^m C_k^2\mathfrak g_{m-k}^2\bigg].
 \end{aligned}
\end{equation*}
The boundary sources contain only height and prescribed-source
terms, so
\begin{equation*}
 \sum_{|i|\leq m}\sum_{a=0}^1
       \|\eta_{a,i}\|_{\rH^{-1/2}(\T^2)^2}^2
 \leq c_m\sum_{k=0}^m C_k^2\bigg(
       \sum_{|i|\leq m-k}E_i+\mathfrak g_{m-k}^2\bigg).
\end{equation*}
For the viscous fluxes, the product rule gives
\begin{equation*}
 \begin{aligned}
 \sum_{|i|\leq m}\big(
       \|\Pi_\rH^i\|_{\rL^2(\Omega)^{2\times2}}^2
       +\|\Pi_z^i\|_{\rL^2(\Omega)^2}^2\big)
 &\leq c_m\sum_{k=1}^m C_k^2\sum_{|i|\leq m-k}
       \|\widetilde V^{\,i}\|_{\rH^1(\Omega)^2}^2 \ \text{ and }  \\
 \sum_{|i|\leq m}
       \|\Pi_\rH^i\|_{\rL^2((0,1),\rH^{1/2}(\T^2)^{2\times2})}^2
 &\leq c_m\sum_{k=1}^m C_k^2\sum_{|i|\leq m-k+1}
       \|\widetilde V^{\,i}\|_{\rH^1(\Omega)^2}^2.
 \end{aligned}
\end{equation*}
The second inequality uses
$\rH^1(\T^2)\hookrightarrow\rH^{1/2}(\T^2)$ on
$\nabla\widetilde V^{\,i-j}$, requiring at most one additional
horizontal derivative. The same observation controls the
differentiated integrals in $r_1^i$. 
Applying \eqref{eq:structured-averaged-inverse} with these bounds
estimates the elliptic contribution to the equation for
$\widetilde h^{\,i}$. As in \autoref{lem:basic-energy}, its
$\rH^{1/2}$ energy is obtained by multiplication of its source norm
by $\|\widetilde h^{\,i}\|_{\rH^{1/2}}$.
Next, observe that 
\begin{equation*}
 |\int_\Omega F_i\cdot\widetilde V^{\,i}\,\dd x|
 \leq\|F_i\|_{\rH^{-1/2}(\Omega)^2}
       \|\widetilde V^{\,i}\|_{\rH^{1/2}(\Omega)^2},
 \quad
 \|\widetilde V^{\,i}\|_{\rH^{1/2}(\Omega)^2}
 \leq\delta\|\widetilde V^{\,i}\|_{\rH^1(\Omega)^2}
       +C_\delta\sqrt{E_i}.
\end{equation*}
The interpolation follows from compactness and $Q$ being comparable
to $1-z$.  The flux and boundary integrals are bounded by
H\"older's inequality, the trace theorem and Young's inequality.
Testing momentum with $\widetilde V^{\,i}$ and height with
$(H/2)h^i$ cancels the pressure terms. Adding the fractional height
estimate and absorbing the small multiples of the dissipation gives
\begin{equation*}
 \begin{aligned}
 &\quad\frac{\dd}{\dd t}\sum_{|i|\leq m}E_i
       +c\sum_{|i|\leq m}
          \|\widetilde V^{\,i}\|_{\rH^1(\Omega)^2}^2\\
 &\leq c_m\bigg[
       \sum_{|i|\leq m}E_i+\mathfrak g_m^2
       +\sum_{|i|\leq m-1}
          \|\widetilde V^{\,i}\|_{\rH^1(\Omega)^2}^2
       +\sum_{k=2}^m C_k^2\sum_{|i|\leq m-k+1}
          (E_i+\|\widetilde V^{\,i}\|_{\rH^1(\Omega)^2}^2)
       +\sum_{k=1}^m C_k^2\mathfrak g_{m-k}^2\bigg].
 \end{aligned}
\end{equation*}
Starting with $E_0$, Gronwall's inequality and induction control
the lower-order terms. Retaining the displayed coefficient factors
and interpolating between orders $m_0$ and $m$ gives
$C_kC_{m-k+1}\leq c_m C_m$ and
\begin{equation*}
 C_k^2\int_0^T\mathfrak g_{m-k+1}(t)^2\,\dd t
 \leq c_m\bigg(
       \int_0^T\mathfrak g_m(t)^2\,\dd t
       +C_m^2\int_0^T\mathfrak g_{m_0}(t)^2\,\dd t\bigg) \ \text{ for } \ 2\leq k\leq m.
\end{equation*}
The terms with $\mathfrak g_{m-k}$ obey the same bound.
This proves the following estimate on the same time interval
at all orders.

\begin{lem}[Higher order tame energy estimate]\label{lem:higher-energy}
Let $(H,V)$ be a prescribed smooth background with $H(0)=H_0$, choose $T>0$ sufficiently small that $\phi(\cdot,t)$ is well defined and let $(h,\widetilde V)$
be a smooth solution of \eqref{eq:off-exact-momentum}
subject to \eqref{eq:good-boundary}, whose initial time derivatives vanish at every order.
Then for every $m\geq m_0$, we have
\begin{equation*}
 \sup_{0\leq t\leq T}\sum_{|i|\leq m}E_i(t)
 +\sum_{|i|\leq m}\int_0^T
       \|\widetilde V^{\,i}(t)\|_{\rH^1(\Omega)^2}^2\,\dd t
 \leq c_{m,T}\int_0^T\left(
       \mathfrak g_m(t)^2
       +C_m(H,V)^2\mathfrak g_{m_0}(t)^2\right)\,\dd t.
\end{equation*}
The constant $c_{m,T}$ depends on the background only through
$C_{m_0}$ and the positive chart bounds.
The integers $m_0,d$ are fixed.
\end{lem}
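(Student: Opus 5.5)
The plan is induction on the total tangential order $m$, starting from \autoref{lem:basic-energy} as the case $m=0$ and carrying out, for each multi-index $i=(i_0,i_1,i_2)$ with $|i|\leq m$, the same two energy identities as in the basic estimate, now applied to the differentiated system \eqref{eq:higher-system}--\eqref{eq:higher-boundary}.

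Since the corrections have vanishing initial time derivatives of every order, each $(h^i,\widetilde V^{\,i})$ again has zero initial data, so $E_i(0)=0$.

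For fixed $i$ there are two estimates to combine.
\begin{enumerate}
\item Test the differentiated momentum equation with $\widetilde V^{\,i}$ and the differentiated height equation with $(H/2)h^i$. The pressure term $Q\nablaH h^i$ and the column term $\frac2H\divH\mathcal Q(Q\widetilde V^{\,i})$ cancel exactly as before, and coercivity \eqref{eq:basic-viscous-coercivity} produces the dissipation $c\|\widetilde V^{\,i}\|_{\rH^1}^2$. The forcing $F_i$ is estimated in $\rH^{-1/2}(\Omega)$. The viscous commutators $\Pi^i_\rH,\Pi^i_z$ are kept in divergence form and paired with $\nabla\widetilde V^{\,i}$ in $\rL^2$. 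The boundary sources $\eta_{a,i}$ are paired with traces of $\widetilde V^{\,i}$ via $\rH^{-1/2}$--$\rH^{1/2}$ duality. The term $\|\widetilde V^{\,i}\|_{\rH^{1/2}}$ is interpolated as $\delta\|\widetilde V^{\,i}\|_{\rH^1}+C_\delta\sqrt{E_i}$, so that the small multiple can be absorbed into the dissipation.
\item Derive the $\rH^{1/2}$ estimate for the modified height $\widetilde h^{\,i}$ from \eqref{eq:higher-modified-height}. Use the commutator argument for $(1-\DeltaH)^{1/4}$ with the transport field $b_1$, \autoref{lem:height-correction-operator}, the bound \eqref{eq:basic-B-time}, and the structured averaged inverse estimate \eqref{eq:structured-averaged-inverse} applied to $Y_i$, $F_i$, $\Pi^i$ and $\eta_{a,i}$.
\end{enumerate}
The estimate \eqref{eq:structured-averaged-inverse} is essential here: $\Pi^i_\rH$ contains $\partial_z\widetilde V^{\,i-j}\otimes\nablaH\partial^j\zeta$, which is not in $\rH^{-1/2}$ in a way controlled by $E_i$ alone. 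By testing with $R\varphi$, we trade it for $\Pi^i_\rH$ in $\rL^2((0,1),\rH^{1/2}(\T^2))$, which costs at most one extra horizontal derivative on $\widetilde V^{\,i-j}$ with $|j|\geq1$. That derivative is therefore of total order at most $|i|$, and the loss is harmless.

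The source bounds need the following inputs.
\begin{itemize}
\item The height terms in $F_i$ and $\eta_{a,i}$ are controlled by $\sqrt{E_j}$ through the harmonic extension and normal-trace bounds.
\item The mass commutator $(\partial^jQ)\partial_t\widetilde V^{\,i-j}$ is handled by $\partial^jQ=(1-z)\partial^j(Q/(1-z))$, which gives control by $\sqrt{E_{i-j+e_0}}$, a term of order at most $|i|$.
\item All other products are estimated with tame Moser-type inequalities, so that each coefficient carrying $k$ derivatives multiplies an unknown of order at most $m-k+1$.
\end{itemize}

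Summing over $|i|\leq m$ yields the differential inequality displayed before the lemma. The top-order terms $\sum_{|i|\leq m}E_i$ and $\mathfrak g_m^2$ carry constants depending only on $C_{m_0}$. All remaining terms are of lower order and carry weights $C_k^2$.

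The tame form is then obtained in three steps.
\begin{enumerate}
\item Apply Gronwall on $[0,T]$. The lower-order sums $\sum_{|i|\leq m-k+1}$ are controlled by the induction hypothesis at order $m-k+1<m$ for $k\geq2$. The term with $|i|\leq m-1$ and no extra weight is absorbed the same way.
\item Insert the inductive bound, which produces products $C_k^2C_{m-k+1}^2\mathfrak g_{m_0}^2$ together with $C_k^2\mathfrak g_{m-k+1}^2$.
\item Use the Gagliardo--Nirenberg/Moser interpolation $C_kC_{m-k+1}\leq c_mC_m$ between orders $m_0$ and $m$, and interpolate the source norms as $C_k^2\int\mathfrak g_{m-k+1}^2\leq c_m(\int\mathfrak g_m^2+C_m^2\int\mathfrak g_{m_0}^2)$. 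This collapses everything into $\int(\mathfrak g_m^2+C_m^2\mathfrak g_{m_0}^2)$.
\end{enumerate}
For $m_0\leq m$ one starts the induction from the non-tame bound at order $m_0$, which follows from the same inequality with all constants depending on $C_{m_0}$.

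I expect the main obstacle to be checking that no source term costs more than order $|i|$ on the unknowns with a constant depending only on $C_{m_0}$. The delicate places are the boundary commutator $r_3^i$, which contains $\partial^j(\mathcal S_HV)\nablaH h^{i-j}$ and must be controlled in $\rH^{-1/2}$ by $\sqrt{E_{i-j}}$ rather than requiring $h^{i-j+1}$, and the column integrals $\divH\int_0^z\mathcal M^{i-j}$ in $r_2^i$. For the latter I would first try moving the horizontal divergence onto the test function in the $\rH^{-1/2}$ pairing, exactly as in the estimate of $\beta\,\partial_{x_i}h$ before the basic energy lemma.
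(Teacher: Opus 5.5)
Your proposal is correct and follows the paper's argument essentially step for step. The shared ingredients are: tangential differentiation of \eqref{eq:off-exact-momentum}; the coupled energy identity from testing with $\widetilde V^{\,i}$ and $(H/2)h^i$; the $\rH^{1/2}$ estimate for $\widetilde h^{\,i}=h^i-B_H\widetilde V^{\,i}$ through the structured averaged inverse \eqref{eq:structured-averaged-inverse}, which lets $\divH\Pi^i_\rH$ be paid for by $\Pi^i_\rH\in\rL^2((0,1),\rH^{1/2}(\T^2))$ instead of a full $\rH^{1/2}(\Omega)$ bound; the weighted treatment of the mass commutator; and closure by Gronwall, induction on the order and the interpolation $C_kC_{m-k+1}\le c_mC_m$ together with its analogue for the source norms. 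The two places you flag as delicate ($r_3^i$ and the column integrals in $r_2^i$) are handled in the paper by exactly the devices you name, namely $\|\nablaH h^{j}\|_{\rH^{-1/2}(\T^2)}\le C\sqrt{E_{j}}$ and moving the horizontal derivative of $h$ onto the vertically averaged test function.
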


It remains to establish vertical regularity. 
Observe that
\autoref{lem:higher-energy} already controls one vertical derivative, so that
\begin{equation*}
 \begin{aligned}
 \|h\|_{\rH^m((0,T)\times\T^2)}^2
 +\sum_{\substack{|i|+\ell\leq m\\\ell=0,1}}
       \|\partial^i\partial_z^\ell\widetilde V
                    \|_{\rL^2((0,T)\times\Omega)^2}^2\leq c_m\int_0^T\sum_{|i|\leq m}
       \left(E_i(t)
       +\|\widetilde V^{\,i}(t)\|_{\rH^1(\Omega)^2}^2\right)\,\dd t.
 \end{aligned}
\end{equation*}
Normal differentiation also produces normal derivatives of $g_2$.
We therefore use the ordinary source norm
\begin{equation*}
 |g|_r=
 \|g_1\|_{\rH^r((0,T)\times\T^2)}
 +\|g_2\|_{\rH^r((0,T)\times\Omega)^2}
 +\sum_{a=3}^4\|g_a\|_{\rH^r((0,T)\times\T^2)^2},
\end{equation*}
so that in particular, it holds that
\begin{equation*}
 \left(\int_0^T\mathfrak g_n(t)^2\,\dd t\right)^{1/2}
       \leq c_n|g|_{n+1}.
\end{equation*}
Now, write the viscosity in divergence form as
\begin{equation*}
 \divH(J\mathcal S_HU)+\partial_z\mathcal T_HU
 =\sum_{a,b=1}^3\partial_a(A^{ab}\partial_bU).
\end{equation*}
Here $\partial_3=\partial_z$, and with $e_1,e_2$ denoting
the coordinate vectors in $\R^2$, the coefficient matrices are
\begin{equation*}
 \begin{aligned}
 A^{ab}&=\mu J\delta_{ab}I_2
       +\mu J e_b\otimes e_a+(\mu'-\mu)J e_a\otimes e_b,
                         &&a,b\in\{1,2\},\\
 A^{a3}&=-\mu(\partial_{x_a}\zeta)I_2
       -\mu\nablaH\zeta\otimes e_a
       -(\mu'-\mu)e_a\otimes\nablaH\zeta,
                         &&a\in\{1,2\},\\
 A^{3a}&=(A^{a3})^\top,
                         &&a\in\{1,2\},\\
 A^{33}&=\frac1J\left[
       \mu(1+|\nablaH\zeta|^2)I_2
       +\mu'\nablaH\zeta\otimes\nablaH\zeta\right].
 \end{aligned}
\end{equation*}
Choosing time $T>0$ small enough implies 
\begin{equation*}
 \xi\cdot A^{33}\xi\geq\frac{\mu}{J}|\xi|^2
       \geq c|\xi|^2
 \ \text{ and }\
 \|(A^{33})^{-1}\|_{\rL^\infty((0,T)\times\Omega)}
       \leq c^{-1}.
\end{equation*}
Consequently, the momentum equation determines the second
normal derivative through
\begin{equation*}
 \begin{aligned}
 A^{33}\partial_z^2\widetilde V
 =Q(\partial_t\widetilde V+\nablaH h)
       -\mathcal G(h,\widetilde V)+Q\widetilde V
 -\sum_{(a,b)\ne(3,3)}
       A^{ab}\partial_a\partial_b\widetilde V
       -\sum_{a,b=1}^3
       (\partial_aA^{ab})\partial_b\widetilde V.
 \end{aligned}
\end{equation*}
Multiplying by $(A^{33})^{-1}$ and using the
explicit formula for $\mathcal G$ gives
\begin{equation*}
 \begin{aligned}
&\quad \|\partial_z^2\widetilde V\|_{\rL^2((0,T)\times\Omega)^2}^2
 \\&\leq c_2\biggl[
       \|\sqrt Q\,\partial_t\widetilde V
                         \|_{\rL^2((0,T)\times\Omega)^2}^2
       +\|\widetilde V\|_{\rL^2(0,T,\rH^1(\Omega)^2)}^2+\sum_{a=1}^2
       \|\partial_{x_a}\widetilde V
                         \|_{\rL^2(0,T,\rH^1(\Omega)^2)}^2
       +\|h\|_{\rL^2(0,T,\rH^1(\T^2))}^2
       +|g|_0^2\biggr].
 \end{aligned}
\end{equation*}
Indeed, the horizontal and mixed second derivatives are
controlled by the norms of $\partial_{x_a}\widetilde V$ in
$\rH^1$. The remaining velocity terms contain at most one
spatial derivative, while the height terms are controlled
by $\|h\|_{\rH^1(\T^2)}$ using the harmonic-extension bounds.
The prescribed forcing is bounded by $|g|_0$.
Consequently, we obtain
\begin{equation*}
 \|\partial_z^2\widetilde V\|_{\rL^2((0,T)\times\Omega)^2}^2
 \leq c_2\bigg[
       \int_0^T\sum_{|i|\leq1}
       \left(E_i(t)
       +\|\widetilde V^{\,i}(t)\|_{\rH^1(\Omega)^2}^2\right)\,\dd t
       +|g|_0^2\bigg].
\end{equation*}
This supplies the first normal derivative missing from the
energy estimate.
For higher orders, we apply $\partial^i\partial_z^{k-2}$
to the same momentum identity, where $k\geq2$ and
$|i|+k=n$. The term to be determined is
$A^{33}\partial^i\partial_z^k\widetilde V$.
The other second-order terms contain fewer than $k$ normal
derivatives at total order $n$. When a derivative falls on
a coefficient, the velocity derivative has total order at
most $n-1$. The differentiated time term also has total
order at most $n-1$.
Thus we estimate the right-hand side first using the bounds
at lower total orders and then the bounds at the same total
order with fewer normal derivatives. Inverting $A^{33}$
at each step recovers all derivatives of total order at
most $m$.
The same induction retains tame dependence on the background.
After enlarging the fixed margin $d$ if necessary, the product
estimates give, for $n\geq2$,
\begin{equation*}
 \begin{aligned}
 &\|h\|_{\rH^n((0,T)\times\T^2)}
       +\|\widetilde V\|_{\rH^n((0,T)\times\Omega)^2}\\
 &\leq c_n\biggl[
 \left(\int_0^T\sum_{|i|\leq n}
       \left(E_i(t)+\|\widetilde V^{\,i}(t)\|_{\rH^1(\Omega)^2}^2\right)
                       \,\dd t\right)^{1/2}
       +|g|_{n-2}+\|\widetilde V\|_{\rH^{n-1}((0,T)\times\Omega)^2}\\
 &\quad+\sum_{r=1}^{n-2}C_r\left(
       \|h\|_{\rH^{n-r}((0,T)\times\T^2)}
       +\|\widetilde V\|_{\rH^{n-r}((0,T)\times\Omega)^2}
       +|g|_{n-r-2}\right)\biggr],
 \end{aligned}
\end{equation*}
where the sum is empty for $n=2$. Indeed, when $r\geq1$
of the $n-2$ differentiations fall on a second-order coefficient,
the unknown has total order at most $n-r$. The other velocity
terms have order at most $n-1$. The harmonic-extension terms
contain at most $n-r-1$ derivatives of $Eh$ and $n-r-2$
derivatives of $Eg_1$, which are controlled by the displayed norms.
Using \autoref{lem:higher-energy} and the interpolation bounds
\begin{equation*}
 C_rC_{m-r}\leq c_mC_m
 \ \text{ and }\
 C_r|g|_{m-r+1}\leq c_m\bigl(|g|_{m+1}+C_m|g|_{m_0+1}\bigr),
 \quad 1\leq r\leq m-2,
\end{equation*}
induction therefore gives
\begin{equation*}
 \|h\|_{\rH^m((0,T)\times\T^2)}
       +\|\widetilde V\|_{\rH^m((0,T)\times\Omega)^2}
 \leq c_{m,T}\bigl(|g|_{m+1}+C_m(H,V)|g|_{m_0+1}\bigr),
 \quad m\geq m_0.
\end{equation*}
Orders at most $m_0$ use only the fixed low background norm.
Finally, the product estimate for
$\delta V=\widetilde V+(Eh)V_z/J$ gives the same tame bound
for $(h,\delta V)$ after increasing the fixed margin $d$ if needed.
All estimates hold on the same time interval.
\subsection{Construction of the smooth linear inverse}
With all a priori estimates in place, it remains to establish existence of a solution $(h, \widetilde V)$ to \eqref{eq:off-exact-momentum} subject to \eqref{eq:good-boundary}. 
We fix the prescribed smooth background $(H,V)$ and choose $T>0$
so that the coordinate transformation is well defined. The sources
$(g_1,g_2,g_3,g_4)$ are smooth and vanish to every order at $t=0$.
We construct $(\widetilde h,\widetilde V)$ by successively solving
momentum with a prescribed modified height and then solving the
modified-height equation with the resulting velocity. 
For the modified height, we define the Banach space $\rZ_T$ by
\begin{equation*}
 \rZ_T=\big\{\vartheta\in\rC([0,T],\rH^{1/2}(\T^2))
                    \mid\vartheta(0)=0\big\}
\end{equation*}
subject to its canonical norm.
Given $\vartheta\in\rZ_T$, we first solve for a velocity $U$ by
substituting $h=\vartheta+B_HU$ into momentum. This gives
\begin{equation}\label{eq:picard-momentum}
 \left\{\begin{aligned}
 Q\bigl(U_t+\nablaH(\vartheta+B_HU)\bigr)+\mathcal L_HU
   &=\mathcal G(\vartheta+B_HU,U)
          &&\text{in }(0,T)\times\Omega,\\
 U(0)&=0.
 \end{aligned}\right.
\end{equation}
Its boundary conditions are
\begin{equation}\label{eq:picard-boundary}
 \mathcal T_HU|_{z=1}=G_1(\vartheta+B_HU)
 \ \text{ and }\
 U_z|_{z=0}=G_0(\vartheta+B_HU).
\end{equation}
In particular, every term depending on $U$, including the terms
containing $B_HU$, belongs to this linear velocity problem.

To construct its weak solution, we define the bilinear form
$p_t$ and the functional $\ell_\vartheta$ by
\begin{equation*}
 \begin{aligned}
 p_t(U,\psi)={}&\int_\Omega\left[
 J\operatorname{tr}\bigl((\mathcal S_HU)^\top\nablaH\psi\bigr)
       +\mathcal T_HU\cdot\partial_z\psi+QU\cdot\psi\right]\,\dd x\\
 &+\int_\Omega\left[Q\nablaH(B_HU)
             -\mathcal G(B_HU,U)+\widetilde g_2\right]\cdot\psi\,\dd x
       -\int_{\T^2}\bigl[G_1(B_HU)-g_3\bigr]
                                      \cdot\psi|_{z=1}\,\dd x_\rH\\
 &+\int_{\T^2}\frac\mu{J|_{z=0}}
       \bigl[G_0(B_HU)-g_4\bigr]\cdot\psi|_{z=0}\,\dd x_\rH,\\
 \ell_\vartheta(\psi)={}&
 \int_\Omega\left[\mathcal G(\vartheta,0)
                         -Q\nablaH\vartheta\right]\cdot\psi\,\dd x
       +\int_{\T^2}G_1(\vartheta)\cdot\psi|_{z=1}\,\dd x_\rH
       -\int_{\T^2}\frac\mu{J|_{z=0}}
                      G_0(\vartheta)\cdot\psi|_{z=0}\,\dd x_\rH.
 \end{aligned}
\end{equation*}
The subtractions in $p_t$ remove the prescribed sources, since
$\mathcal G(0,0)=\widetilde g_2$, $G_1(0)=g_3$ and $G_0(0)=g_4$.
Thus
\eqref{eq:picard-momentum} subject to \eqref{eq:picard-boundary}
has the weak formulation
\begin{equation*}
 \int_\Omega QU_t\cdot\psi\,\dd x+p_t(U,\psi)
      =\ell_\vartheta(\psi) \ \text{ for all }\ \psi\in\rH^1(\Omega)^2.
\end{equation*}
The previously established estimates of $\mathcal G$, $G_1$, $G_0$
and $B_H$ give
\begin{equation*}
 \begin{aligned}
 |p_t(U,\psi)|&\leq C\|U\|_{\rH^1(\Omega)^2}
                                  \|\psi\|_{\rH^1(\Omega)^2},\\
 \|\ell_\vartheta\|_{(\rH^1(\Omega)^2)^*}
       &\leq C\bigl(\|\vartheta\|_{\rH^{1/2}(\T^2)}
                                           +\mathfrak g(t)\bigr)
 \ \text{ and }\
 \|\ell_{\vartheta_1}-\ell_{\vartheta_2}\|_{(\rH^1(\Omega)^2)^*}
       \leq C\|\vartheta_1-\vartheta_2\|_{\rH^{1/2}(\T^2)}.
 \end{aligned}
\end{equation*}
Moreover, the first integral defining $p_t$ is the coercive
shifted viscous form defined in \eqref{eq:form} and when $\psi=U$, we obtain
\begin{equation*}
 C\|U\|_{\rH^1(\Omega)^2}\|U\|_{\rL^2(\Omega)^2}
 +C\|\sqrt Q\,U\|_{\rL^2(\Omega)^2}\|U\|_{\rH^1(\Omega)^2}
 \leq\varepsilon\|U\|_{\rH^1(\Omega)^2}^2
       +C_\varepsilon\|\sqrt Q\,U\|_{\rL^2(\Omega)^2}^2.
\end{equation*}
Consequently,
\begin{equation*}
 p_t(U,U)\geq c\|U\|_{\rH^1(\Omega)^2}^2
                        -C\|\sqrt Q\,U\|_{\rL^2(\Omega)^2}^2.
\end{equation*}

To apply Lions' variational existence argument
\cite[Chapter~IV, Theorem~1.1]{Lions1961}, see also \cite[Theorem 1.1]{ADLO}, we define
\begin{equation*}
 \rV=\rH^1(\Omega)^2 \ \text{ and } \
 \rX=\rL^2(\Omega,(1-z)\,\dd x)^2.
\end{equation*}
Moreover, set $\sigma=\sqrt{\frac Q{1-z}}
  \text{ and }\widehat U=\sigma U.$
Both $\sigma$ and $\sigma^{-1}$ are smooth and bounded.
Testing with $\sigma^{-1}\psi$ changes the bilinear form to
\begin{equation*}
 \widehat p_t(u,\psi)
 =p_t(\sigma^{-1}u,\sigma^{-1}\psi)
       -\int_\Omega(1-z)\frac{\sigma_t}{\sigma}u\cdot\psi\,\dd x.
\end{equation*}
The transformed form then satisfies
\begin{equation*}
 \begin{aligned}
 |\widehat p_t(u,\psi)|
 \leq C\|u\|_{\rH^1(\Omega)^2}
          \|\psi\|_{\rH^1(\Omega)^2} \ \text{ and } \
 \widehat p_t(u,u)
 \geq c\|u\|_{\rH^1(\Omega)^2}^2
       -C\|\sqrt{1-z}\,u\|_{\rL^2(\Omega)^2}^2,
 \end{aligned}
\end{equation*}
for all $u,\psi\in\rH^1(\Omega)^2$, where $c>0$ and
the constants are uniform for $t\in[0,T]$.
Therefore we obtain a unique solution with
\begin{equation*}
 \widehat U\in\rL^2(0,T,\rV)\cap\rC([0,T],\rX)
 \ \text{ and }\
 \partial_t\bigl((1-z)\widehat U\bigr)
                                  \in\rL^2(0,T,\rV^*).
\end{equation*}
We denote the resulting velocity by $U[\vartheta]$.
Subtracting the problems for $\vartheta_1$ and $\vartheta_2$,
and writing $U_a=U[\vartheta_a]$ for $a=1,2$, gives
\begin{equation}\label{eq:picard-velocity-difference}
 \begin{aligned}
 \sup_{0\leq t\leq T}\|\sqrt Q\,(U_1-U_2)(t)\|_{\rL^2(\Omega)^2}^2
       +\int_0^T\|U_1-U_2\|_{\rH^1(\Omega)^2}^2\,\dd t
       \leq CT\|\vartheta_1-\vartheta_2\|_{\rZ_T}^2.
 \end{aligned}
\end{equation}
We next solve the modified-height equation with this velocity.
To display this second solve, we define $\mathcal R(\eta,U)$ by
\begin{equation*}
 \begin{aligned}
 \mathcal R(\eta,U)={}&g_1+B_H\nablaH(\eta+B_HU)
       -(\partial_tB_H)U-b_1\cdot\nablaH(B_HU)-b_2B_HU\\
 &-\frac2H\divH\mathcal Q\bigl(
       Q\mathcal L_H^{-1}\mathcal G(\eta+B_HU,U)\bigr)
       -\frac2H\divH\mathcal Q\bigl(
       Q\mathcal N_H(G_1(\eta+B_HU),G_0(\eta+B_HU))\bigr).
 \end{aligned}
\end{equation*}
Thus $\mathcal R$ is exactly the right-hand side of
\eqref{eq:modified-height-equation} after $h=\eta+B_HU$.
The estimate \eqref{eq:modified-height-source-bound} gives
\begin{equation*}
 \begin{aligned}
 \|\mathcal R(0,0)\|_{\rH^{1/2}(\T^2)}&\leq C\mathfrak g(t) \ \text{ and } \\
 \|\mathcal R(\eta_1,U_1)-\mathcal R(\eta_2,U_2)\|_{\rH^{1/2}(\T^2)}
 &\leq C\bigl(\|\eta_1-\eta_2\|_{\rH^{1/2}(\T^2)}
                         +\|U_1-U_2\|_{\rH^1(\Omega)^2}\bigr).
 \end{aligned}
\end{equation*}
For the prescribed $U[\vartheta]$, we solve
\begin{equation}\label{eq:picard-height}
 \eta_t+b_1\cdot\nablaH\eta+b_2\eta
       =\mathcal R(\eta,U[\vartheta])
       \quad\text{in }(0,T)\times\T^2
 \ \text{ and }\ \eta(0)=0.
\end{equation}
Indeed, the smooth prescribed flow defined by
\begin{equation*}
 \partial_t\Phi(t,x_\rH)=b_1(t,\Phi(t,x_\rH))
 \ \text{ and }\ \Phi(0,x_\rH)=x_\rH
\end{equation*}
acts boundedly, together with its inverse, on $\rH^{1/2}(\T^2)$. Indeed, since $b_1$ is smooth, its flow $\Phi(t,\cdot)$ is a smooth
orientation-preserving diffeomorphism of $\T^2$.
Along this flow, \eqref{eq:picard-height} becomes the linear
integral equation
\begin{equation*}
 \eta(t,\Phi(t,x_\rH))
 =\int_0^t\bigl[\mathcal R(\eta,U[\vartheta])-b_2\eta\bigr]
                           (s,\Phi(s,x_\rH))\,\dd s.
\end{equation*}
Its operator on $\eta$ is bounded on $\rH^{1/2}(\T^2)$,
and its prescribed forcing belongs to
$\rL^2(0,T,\rH^{1/2}(\T^2))$. The linear integral equation
therefore has a unique solution $\eta\in\rZ_T$.

These two successive solves define the contraction map $\Psi_T$ by
\begin{equation*}
 \begin{aligned}
 \Psi_T\colon\rZ_T&\to\rZ_T, \quad
 \vartheta \mapsto U[\vartheta] \mapsto \eta.
 \end{aligned}
\end{equation*}
Indeed, if
$\eta_a=\Psi_T(\vartheta_a)$ for $a=1,2$, the transport estimate
and Gronwall's inequality give
\begin{equation*}
 \|\eta_1-\eta_2\|_{\rZ_T}
 \leq C\int_0^T\|U_1-U_2\|_{\rH^1(\Omega)^2}\,\dd t
 \leq C\sqrt T\,
               \|U_1-U_2\|_{\rL^2(0,T,\rH^1(\Omega)^2)}.
\end{equation*}
Combining this with \eqref{eq:picard-velocity-difference} yields
\begin{equation*}
 \|\Psi_T(\vartheta_1)-\Psi_T(\vartheta_2)\|_{\rZ_T}
       \leq CT\|\vartheta_1-\vartheta_2\|_{\rZ_T}.
\end{equation*}
Here $C$ is uniform for $T$ in a fixed sufficiently small interval
and depends only on finitely many background norms and the chart
bounds. Choosing $CT<1$, the contraction mapping principle yields
\begin{equation*}
 \widetilde h=\Psi_T(\widetilde h),\quad
 \widetilde V=U[\widetilde h]
 \ \text{ and }\ h=\widetilde h+B_H\widetilde V.
\end{equation*}
Explicitly, the iterates
$\vartheta^{(0)}=0$ and
$\vartheta^{(k+1)}=\Psi_T(\vartheta^{(k)})$ converge in $\rZ_T$,
and $U[\vartheta^{(k)}]$ converges in the velocity norms of
\eqref{eq:picard-velocity-difference}. The index $k$ is used only
for this construction and does not denote a Nash-Moser step.
The reconstruction of $h$ is valid for this weak solution.
To justify its time derivative, we define the weak averaged inverse
$\mathcal A_H$ by
\begin{equation*}
 \mathcal A_H\colon(\rH^1(\Omega)^2)^*\to\rL^2(\T^2),
 \quad F\mapsto\frac2H\divH\mathcal Q(Q\mathcal L_H^{-1}F)
\end{equation*}
so that
$B_HU=\mathcal A_H(QU)$. Differentiating the weak elliptic inverse
with respect to the prescribed smooth background also gives
\begin{equation*}
 \partial_t\mathcal A_H
       \in\mathcal L\bigl(\rV^*,\rL^2(\T^2)\bigr).
\end{equation*}
Since
$\partial_t(Q\widetilde V)\in\rL^2(0,T,\rV^*)$,
we obtain
\begin{equation*}
 \partial_t(B_H\widetilde V)
       =(\partial_t\mathcal A_H)(Q\widetilde V)
                    +\mathcal A_H\partial_t(Q\widetilde V).
\end{equation*}
Substitution in the modified-height equation recovers the original
height equation. Thus the fixed point solves the full linear
correction problem, with the boundary conditions in the weak sense.
The contraction mapping principle gives a unique modified height,
and uniqueness of the velocity solve gives uniqueness of
$(h,\widetilde V)$.
Moreover, for the same prescribed background, if two sets
of sources agree on $[0,t_\ast]$, their solutions also agree
on $[0,t_\ast]$. Indeed, their restrictions solve the same
problem on that interval, where the preceding uniqueness
argument applies.

We now establish smoothness of the constructed solution.
Using the prescribed flow $\Phi$ to remove the height transport
and normalizing the velocity time derivative, we work in fixed
energy spaces. The successive solves give a bounded inverse
for independently prescribed height and momentum sources.
The elliptic estimates for the inverse and boundary lifting
give smooth dependence of the resulting operators on translations
of the prescribed coefficients. Difference quotients and
uniqueness therefore show that all time and horizontal derivatives
of the solution belong to the basic energy spaces.

Recovering the normal derivatives from the momentum equation
using the uniformly invertible matrix $A^{33}$ then gives
\begin{equation*}
 h\in\bigcap_{m\geq0}\rH^m((0,T)\times\T^2)
 \ \text{ and }\
 \widetilde V\in
 \bigcap_{m\geq0}\rH^m((0,T)\times\Omega)^2.
\end{equation*}
Hence Sobolev embedding gives smoothness up to the boundaries.
The difference-quotient argument also applies across $t=0$
after extending the background smoothly and the sources by
zero to negative time. Uniqueness makes the extended solution
zero there, so all its initial derivatives vanish.
Finally, $\delta V=\widetilde V+(Eh)V_z/J$ is smooth as well.
All estimates hold on the interval fixed by the original
contraction. Applying the preceding a priori estimates therefore
gives the following result.
\begin{prop}[Smooth tame linear inverse]\label{prop:linear-inverse}
Let $(H,V)$ be a prescribed smooth background, and choose
$T>0$ sufficiently small that the coordinate transformation
is well defined and the contraction above holds.
For every smooth source $(g_1,g_2,g_3,g_4)$ vanishing to every
order at $t=0$, the full linearized problem has a unique smooth
solution $(h,\delta V)$ whose initial time derivatives all vanish.
For the fixed integers $m_0,d$ in the preceding estimates
and every $m\geq m_0$, it satisfies
\begin{equation*}
 \|h\|_{\rH^m((0,T)\times\T^2)}
 +\|\delta V\|_{\rH^m((0,T)\times\Omega)^2}
 \leq c_{m,T}\bigl(
       |g|_{m+d}+C_m(H,V)|g|_{m_0+d}\bigr).
\end{equation*}
The constant $c_{m,T}$ is uniform when the prescribed chart
bounds and a fixed low background norm are controlled.
 Moreover, if two sources
agree on $[0,t_*]$, their solutions agree on $[0,t_*]$.
\end{prop}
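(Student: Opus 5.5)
The proof assembles the construction and the a priori estimates established above. No new estimate is needed.

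\emph{Existence and uniqueness.} Fix the background $(H,V)$ and choose $T$ so that the chart bounds hold and $CT<1$ in the contraction for $\Psi_T$. For a smooth source vanishing to every order at $t=0$, the two successive solves (the Lions variational solve for $U[\vartheta]$ and the transport solve along the flow $\Phi$) give a unique fixed point $\widetilde h\in\rZ_T$. We set $\widetilde V=U[\widetilde h]$ and $h=\widetilde h+B_H\widetilde V$. The weak time derivative of $B_H\widetilde V$, computed through $\mathcal A_H$, shows that $(h,\widetilde V)$ solves \eqref{eq:off-exact-momentum} with \eqref{eq:good-boundary}. Since the substitution \eqref{eq:good-unknown} is reversible and the sources change only by $\widetilde g_2=g_2-(Q/J)(Eg_1)V_z$, the pair $(h,\delta V)$ with $\delta V=\widetilde V+(Eh)V_z/J$ solves the full linearized problem. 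Uniqueness follows from uniqueness of the fixed point and of the velocity solve. The causality statement follows the same way: restricting to $[0,t_\ast]$, both solutions solve the same problem there, and the contraction argument applies on that subinterval.

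\emph{Smoothness and vanishing initial derivatives.} First extend the background smoothly to $t<0$ and the source by zero. Apply the difference-quotient argument in $t$ and $x_\rH$. It works because the coefficients of the successive-solve operators depend smoothly on translations, and the solution map is bounded on the basic energy spaces. This puts all tangential derivatives in the basic energy class of \autoref{lem:basic-energy}. Next, recover the normal derivatives inductively by inverting $A^{33}$ in the momentum identity. This gives $h,\widetilde V\in\rH^m$ for every $m$, and Sobolev embedding yields smoothness up to the boundary. Uniqueness on negative times forces the extension to vanish there, so all initial derivatives are zero. This is exactly the hypothesis required in \autoref{lem:higher-energy}.

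\emph{Tame bound.} Combine \autoref{lem:higher-energy} with $(\int_0^T\mathfrak g_n^2\,\dd t)^{1/2}\le c_n|g|_{n+1}$. Then run the normal-derivative induction with the interpolation inequalities $C_rC_{m-r}\le c_mC_m$ and $C_r|g|_{m-r+1}\le c_m(|g|_{m+1}+C_m|g|_{m_0+1})$. This gives $\|h\|_{\rH^m}+\|\widetilde V\|_{\rH^m}\le c_{m,T}(|g|_{m+1}+C_m|g|_{m_0+1})$. Here $\widetilde g_2$ involves $Eg_1$, which costs no derivatives beyond those already absorbed in $d$. To pass to $\delta V$, apply a Moser product estimate to $(Eh)V_z/J$:
\[
\|(Eh)V_z/J\|_{\rH^m}\le c\bigl(\|h\|_{\rH^m}\,C_{m_0}+\|h\|_{\rH^{m_0}}\,C_m\bigr).
\]
Inserting the bound at order $m_0$ into the second term, and enlarging $d$ to absorb the derivative count, gives the stated estimate with $|g|_{m+d}$ and $C_m|g|_{m_0+d}$.

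\emph{Main obstacle.} The delicate step is the smoothness argument. The difference quotients must be shown to commute with the modified-height construction, which involves the nonlocal, time-dependent operators $B_H$, $\mathcal L_H^{-1}$ and $\mathcal N_H$, and the bounds must stay uniform on the same interval $T$ as the original contraction. I would handle this by first writing the solution map as a bounded inverse for independent height and momentum sources. Then the commutators with translations are smooth-coefficient perturbations of lower order, controlled by the basic estimate, so the contraction constant, and hence $T$, does not deteriorate.
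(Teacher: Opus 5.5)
Your treatment of existence, uniqueness, causality and the tame bound follows the paper. That covers the contraction for $\Psi_T$, the reconstruction of $h$ through $\mathcal A_H$, and reversibility of \eqref{eq:good-unknown}. It also covers \autoref{lem:higher-energy} with $(\int_0^T\mathfrak g_n^2\,\dd t)^{1/2}\leq c_n|g|_{n+1}$, the $A^{33}$ induction with the interpolation bounds, and the product estimate for $(Eh)V_z/J$.

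The gap is in the smoothness step, where you yourself place the obstacle. The translation commutators are \emph{not} lower-order perturbations controlled by the basic estimate.

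For a horizontal quotient $D^\epsilon=\epsilon^{-1}(\tau_\epsilon-1)$, the modified-height equation picks up the forcing $-(D^\epsilon b_1)\cdot\nablaH\tau_\epsilon\widetilde h$. This term is first order in $\widetilde h$. The height equation is pure transport and gives no smoothing. The basic theory (\autoref{lem:basic-energy}, or the fixed point in $\rZ_T$) needs height forcing in $\rH^{1/2}(\T^2)$, yet only yields $\widetilde h\in\rC([0,T],\rH^{1/2}(\T^2))$.

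The commutator therefore requires $\nablaH\widetilde h\in\rH^{1/2}(\T^2)$, which is exactly the regularity you are trying to prove, and it cannot be bounded by the quotient $D^\epsilon\widetilde h$. In \autoref{lem:higher-energy} the analogous term $(\partial^jb_1)\cdot\nablaH h^{i-j}$ is harmless only because all tangential derivatives of that order are summed and are already known to exist. Put differently, the transport solve along $\Phi$ commutes with translations only up to a first-order error.

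Time quotients have a similar defect. The time-dependent weight in $QU_t$ produces $(D^\epsilon_tQ)\,U_t(t+\epsilon)$. At the basic level this is only an $\rL^2(0,T,\rV^*)$ source. It is not of the $\rH^{-1/2}(\Omega)$ type that $\mathcal A_H$ needs in order to return an $\rH^{1/2}(\T^2)$ bound for the modified height.

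The missing idea is the normalization the paper performs before differencing.
\begin{enumerate}
\item It uses the flow $\Phi$ of $b_1$ to remove the height transport. The height solve then becomes a time integration along characteristics, which commutes with translations.
\item It passes to $\widehat U=\sigma U$, so the velocity time derivative carries the fixed weight $1-z$.
\end{enumerate}
Only in these fixed energy spaces do the remaining operators produce commutators bounded at the same order. These operators are $B_H$, $\mathcal L_H^{-1}$, $\mathcal N_H$ and the viscous form, whose coefficients depend smoothly on translations.

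With that step in place, difference quotients and uniqueness put all tangential derivatives in the basic class on the original interval. Your normal-derivative recovery, the vanishing of initial derivatives via extension to negative times, and the tame bound then go through as you describe.
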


\section{Nonlinear existence and uniqueness}\label{sec:nonlinear-existence}
It only remains to prove the main theorem \autoref{thm:vacuum-existence} which is done below.
\begin{proof}[Proof of \autoref{thm:vacuum-existence}]
By the compatibility conditions \autoref{def:vacuum-compatibility}, we choose a smooth pair
$(H^{\mathrm{app}},V^{\mathrm{app}})$ with the prescribed
initial values and all initial time derivatives determined
by the equations and the change of variables. Thus
\begin{equation*}
 H^{\mathrm{app}}|_{t=0}=H_0
 \ \text{ and }\
 V^{\mathrm{app}}|_{t=0}=V_0.
\end{equation*}
As in \cite[Section~2]{Lindblad}, this pair serves as an
approximate solution whose residual vanishes to every
order at the initial time. More precisely, we define
$\mathfrak r$ by
\begin{equation*}
 \mathfrak r=\mathcal F(H^{\mathrm{app}},V^{\mathrm{app}})
 \ \text{ and obtain }\
 \partial_t^j\mathfrak r|_{t=0}=0
 \quad\text{for every }j\geq0.
\end{equation*}
We choose $T_0>0$ sufficiently small that the coordinate
transformation is well defined and \autoref{prop:linear-inverse}
applies. This interval is kept fixed below.

We seek smooth corrections $(h,\delta V)$ whose time derivatives
of every order vanish at $t=0$. The prescribed sources
$g=(g_1,g_2,g_3,g_4)$ satisfy the same condition.
For $m\in\N_0$, we define their Sobolev norms by
\begin{equation*}
 \begin{aligned}
 |(h,\delta V)|_m
 &=\|h\|_{\rH^m((0,T_0)\times\T^2)}
   +\|\delta V\|_{\rH^m((0,T_0)\times\Omega)^2}
 \ \text{ and }\\
 |g|_m
 &=\|g_1\|_{\rH^m((0,T_0)\times\T^2)}
   +\|g_2\|_{\rH^m((0,T_0)\times\Omega)^2}
   +\sum_{j=3}^4
       \|g_j\|_{\rH^m((0,T_0)\times\T^2)^2}.
 \end{aligned}
\end{equation*}
The smoothing and interpolation properties required in
\cite[Theorem~3.4]{Poppenberg} are then satisfied by \cite[Proposition~3.3, with $m=1$]{Poppenberg},
after a fixed shift of the derivative index.
To apply the inverse theorem at zero, we define the centered
residual $\mathcal F_{\mathrm{app}}$ by
\begin{equation*}
 \mathcal F_{\mathrm{app}}(h,\delta V)
 =\mathcal F(H^{\mathrm{app}}+h,V^{\mathrm{app}}+\delta V)
       -\mathfrak r.
\end{equation*}
This map preserves the stated initial conditions and satisfies
$\mathcal F_{\mathrm{app}}(0)=0$. Its derivative is precisely the
full linearized operator at the corrected background. The products,
positive reciprocals, harmonic extension, vertical integration and
traces in \eqref{eq:full-residual} give a smooth map. More precisely,
there is a fixed integer $d_0$ such that,
for a correction $a=(h,\delta V)$ and correction directions $a_1,a_2$,
\begin{equation*}
 \begin{aligned}
 |D\mathcal F_{\mathrm{app}}(a)a_1|_m
 &\leq c_m\left(|a_1|_{m+d_0}
       +(1+|a|_{m+d_0})|a_1|_{d_0}\right),\\
 |D^2\mathcal F_{\mathrm{app}}(a)[a_1,a_2]|_m
 &\leq c_m\left(|a_1|_{m+d_0}|a_2|_{d_0}
       +|a_2|_{m+d_0}|a_1|_{d_0}
       +(1+|a|_{m+d_0})|a_1|_{d_0}|a_2|_{d_0}\right).
 \end{aligned}
\end{equation*}
Here the constants may depend on the fixed approximation, while
$a$ ranges over the chosen neighborhood. Together with the
tame linear inverse in \autoref{prop:linear-inverse}, these bounds
verify the hypotheses of \cite[Theorem~3.4]{Poppenberg}.
Consequently, $\mathcal F_{\mathrm{app}}$ has a smooth local inverse
between neighborhoods of zero in these spaces.
Following \cite[Section~2]{Lindblad}, we extend $\mathfrak r$ by zero
to negative times and define its delayed value by
\begin{equation*}
 \mathfrak r_\tau(t)=\mathfrak r(t-\tau),\quad
 |\mathfrak r_\tau-\mathfrak r|_m
       \leq\tau|\mathfrak r|_{m+1}
 \ \text{ and }\
 \mathfrak r_\tau|_{[0,\tau]}=0,
 \quad 0<\tau<T_0.
\end{equation*}
The zero extension is smooth. The estimate follows by integrating its time derivative over an
interval of length $\tau$, so the difference tends to zero in every
fixed Sobolev norm. For sufficiently small $\tau$, the local inverse
therefore gives a smooth pair $(h_\tau,\delta V_\tau)$ with
\begin{equation*}
 \mathcal F_{\mathrm{app}}(h_\tau,\delta V_\tau)
       =\mathfrak r_\tau-\mathfrak r
 \ \text{ and hence }\
 \mathcal F(H^{\mathrm{app}}+h_\tau,
             V^{\mathrm{app}}+\delta V_\tau)
       =\mathfrak r_\tau.
\end{equation*}
Restricting to $[0,\tau]$ yields an exact solution with the prescribed
initial values. The inverse transformation gives $v=V\circ\phi^{-1}$,
and \eqref{eq:physical-diagnostic} determines $(H-z)w$.
The equivalence of the transformed and physical equations gives
\eqref{eq:physical-reduced} subject to
\eqref{eq:physical-reduced-boundary}. Smoothness holds up to the
moving boundary, and continuity allows us to shorten the interval
so that $H\geq\tfrac12\min_{\T^2}H_0$.

For uniqueness, shrink the range neighborhood of
$\mathcal F_{\mathrm{app}}^{-1}$ to a convex neighborhood. Its
derivative at a source is the inverse of the full linearization
at the corresponding pair. By \autoref{prop:linear-inverse}, this
derivative maps sources vanishing on $[0,t_*]$ to corrections
vanishing there. Integrating the derivative along the segment
between two sources consequently shows that their inverse images
agree on $[0,t_*]$ whenever the sources agree there.

Any two smooth solutions with the same initial data have the same
initial derivatives by the compatibility recursion. Their
differences from $(H^{\mathrm{app}},V^{\mathrm{app}})$ therefore
vanish to every order at $t=0$. Choose a smooth cutoff
$\vartheta$ equal to one on $[0,1]$ and zero on $[2,\infty)$.
For either difference $a$, we have
\begin{equation*}
 |\vartheta(t/\tau)a|_m\leq c_{m,N}\tau^N
 \quad\text{for every fixed }m,N\geq0.
\end{equation*}
For small $\tau$, with $2\tau$ inside their common interval,
these cutoff differences belong to the inverse neighborhood and
are extended by zero beyond $2\tau$. Their centered residuals
both equal $-\mathfrak r$ on $[0,\tau]$. The preceding property of
the inverse proves equality of the two solutions there.
Restarting this argument at an endpoint of agreement proves
uniqueness on their common interval of smooth existence.
\end{proof}

\end{document}